\newtheorem{thm}{Theorem}[section]
\newtheorem{cor}[thm]{Corollary}
\newtheorem{lemma}[thm]{Lemma}
\newtheorem{prop}[thm]{Proposition}
\theoremstyle{definition}
\theoremstyle{remark}
\newtheorem{rem}[thm]{Remark}
\newcommand{\id}{\textrm{id}}
\begin{document}

\title{A Riemannian manifold with skew-circulant structures and an associated locally conformal K\"{a}hler manifold}
\author{Dimitar Razpopov}
\address{Dimitar Razpopov\\Department of Mathematics and Physics\\ Agricultural University of Plovdiv\\ 12 Mendeleev Blvd.\\ 4000 Plovdiv, Bulgaria} \email{razpopov@au-plovdiv.bg}
\author{Iva Dokuzova}
\address{Iva Dokuzova\\Department of Algebra and Geometry\\University of Plovdiv Paisii Hilendarski\\ 24 Tzar Asen, 4000 Plovdiv, Bulgaria} \email{dokuzova@uni-plovdiv.bg}

\begin{abstract}
A 4-dimensional Riemannian manifold $M$, equipped with an additional tensor structure $S$, whose fourth power is minus identity, is considered.
The structure $S$ has a skew-circulant matrix with respect to some basis of the tangent space at a point on $M$. Moreover, $S$ acts as an isometry with respect to the metric $g$. A fundamental tensor is defined on such a manifold $(M,g,S)$ by $g$ and by the covariant derivative of $S$. This tensor satisfies a characteristic identity which is invariant to the usual conformal transformation. Some curvature properties of $(M,g,S)$ are obtained.
A Lie group as a manifold of the considered type is constructed.
A Hermitian manifold associated with $(M,g,S)$ is also considered. It turns out that it is a locally conformal K\"{a}hler manifold.
\end{abstract}

\subjclass[2010]{primary 53B20; secondary 53C15, 53C55, 53C25, 22E60}
\keywords{Riemannian metric, locally conformal K\"{a}hler manifold, Ricci curvature, Lie group}

\maketitle

\section{Introduction}\label{1}

 The classification of almost Hermitian manifolds with respect to the covariant derivative of the almost complex structure $J$ is made by Gray and Hervella in \cite{GrHer}.
 The Hermitian manifolds form a class of manifolds with an integrable almost complex structure $J$. Their subclass consists of the so-called locally conformal K\"{a}hler manifolds, determined  by a special property of the covariant derivative of $J$.  The class of K\"{a}hler manifolds is common to all classes in this classification and its manifolds have the richest geometry. The K\"{a}hler manifolds are extensively studied by many geometers. There is also a great interest in the study of locally conformal K\"{a}hler manifolds, since their structure $J$ satisfies similar but weaker conditions than those of K\"{a}hler manifolds. Some of the recent investigations of locally conformal K\"{a}hler manifolds are made in \cite{angella, cherevko, huang, moroianu-ornea, prvanovic, vilcu}.

Problems in differential geometry of a $4$-dimensional Riemannian manifold $M$ with a tensor structure $S$ of type $(1,1)$, which satisfies $S^{4}=-\id$, are considered in \cite{dok-raz}. The matrix of $S$ in some basis of the tangent space $T_{p}M$, at an arbitrary point $p$ on $M$, is skew-circulant. Moreover, $S$ is compatible with the metric $g$, so that an isometry is induced in $T_{p}M$. A Hermitian manifold $(M, g, J)$, where $J=S^{2}$, is associated with such a manifold $(M, g, S)$.

 In the present work we continue the study of $(M, g, S)$ and $(M, g, J)$.
  In Section~\ref{2}, we recall some necessary facts about these manifolds. In Section~\ref{4}, we compute the components of the fundamental tensor $F$ on $(M, g, S)$ determined by the metric $g$ and by the covariant derivative of $S$. We obtain an important characteristic identity for $F$. We establish that the image of the fundamental tensor with respect to the usual conformal transformation satisfies the same identity. In Section~\ref{5}, we find some curvature properties of $(M, g, S)$.  In Section~\ref{3}, we establish that the associated manifold $(M, g, J)$ belongs to the class of locally conformal K\"{a}hler manifolds. In Section~\ref{6}, we construct a Lie group with a Lie algebra of a special class as a manifold with the structure $(g, S)$.

 \section{Preliminaries}\label{2}
   We consider a $4$-dimensional Riemannian manifold $M$ equipped with a tensor $S$ of type $(1,1)$. The structure $S$ has a skew-circulant matrix, with respect to some basis $\{e_{i}\}$ of $T_{p}M$, given by
\begin{equation}\label{Q}
    (S_{j}^{k})=\begin{pmatrix}
      0 & 1 & 0 & 0\\
      0 & 0 & 1 & 0\\
      0 & 0 & 0 & 1\\
      -1 & 0 & 0 & 0\\
    \end{pmatrix}.
\end{equation}
Then $S$ has the property
\begin{equation*}
    S^{4}=-\id.
\end{equation*}
The metric $g$ and the structure $S$ satisfy
\begin{equation}\label{izometry}
 g(Sx, Sy)=g(x,y),\quad x, y\in \mathfrak{X}(M).
\end{equation}
The above condition and \eqref{Q} imply that the matrix of $g$ has the form:
\begin{equation}\label{metricg}
    (g_{ij})=\begin{pmatrix}
      A & B & 0 & -B \\
      B & A & B & 0\\
      0 & B & A & B\\
      -B & 0 & B & A\\
    \end{pmatrix}.
\end{equation}
Here $A$ and $B$ are smooth functions of an arbitrary point $p(x^{1}, x^{2},
x^{3}, x^{4})$ on $M$.
It is supposed that $A >\sqrt{2} B >0$ in order $g$ to be positive definite. The manifold $(M, g, S)$ is introduced in \cite{dok-raz}.

Anywhere in this work, $x, y, z, u$ will stand for arbitrary elements of the algebra of the smooth vector fields $\mathfrak{X}(M)$ or vectors in the tangent space $T_{p}M$. The Einstein summation convention is used, the range of the summation indices being always $\{1, 2, 3, 4\}$.

In \cite{dok-raz}, it is noted that the manifold $(M, g, J)$, where $J = S^{2}$, is a Hermitian manifold with an almost complex structure $J$. For such manifolds Grey-Hervella's classification is valid (\cite{GrHer}). This classification is made with respect to the covariant derivative of the K\"{a}hler form $J(x,y)=g(x,Jy).$ The almost Hermitian manifolds with an integrable structure $J$ are called Hermitian manifolds.
Their subclass of locally conformal K\"{a}hler manifolds is determined by the property
\begin{equation}\label{J}
\begin{split}
 g\big((\nabla_{x}J)y,z\big)=&\frac{1}{2}\big\{(g(x,y)\omega(z)-g(x,z)\omega(y)+g(x,Jy)\omega(Jz)\\&-g(x, Jz)\omega(Jy)\big\},\quad \omega(x)= g^{ij}g\big((\nabla_{e_{i}}J)e_{j},x\big).
\end{split}
\end{equation}
Here $\nabla$ is the Levi-Civita connection of $g$, and $g^{ij}$ are the components of the inverse matrix of $(g_{ij})$ with respect to the basis $\{e_{i}\}$ of $T_{p}M$.

It is known that the class of 4-dimensional locally conformal K\"{a}hler manifolds is non-trivial. Every K\"{a}hler manifold belongs to the class of locally conformal K\"{a}hler manifolds (\cite{GrHer}).

Now we consider an associated metric $\tilde{g}$ with $g$ on $(M, g, S)$, determined by
\begin{equation}\label{metricf}
 \tilde{g}(x, y)=g(x, Sy)+g(Sx, y).\end{equation}
The fundamental tensor $F$ of type $(0,3)$ and the 1-form $\theta$ are defined by
\begin{equation}\label{F}
  F(x,y,z)=(\nabla_{x}\tilde{g})(y,z),\quad \theta(x)=g^{ij}F(e_{i},e_{j},x),
\end{equation}
and $F$ has the property
\begin{equation}\label{prop-F}
  F(x,z,y)=F(x,y,z).
\end{equation}

If $F=0$, then the structure $S$ is parallel with respect to $\nabla$. The following necessary and sufficient conditions for $S$ and also for $J=S^{2}$ to be parallel structures with respect to $\nabla$ are established in \cite{dok-raz}.
\begin{thm}\label{alm-keler}
The manifold $(M, g, S)$ satisfies $\nabla S=0$ if and only if
\begin{align*}
   A_{1}=B_{2}-B_{4},\quad A_{2}=B_{1}+B_{3},\quad A_{3}=B_{2}+B_{4},\quad A_{4}=B_{3}-B_{1},
\end{align*}
where $A_{i}=\dfrac{\partial A}{\partial x^{i}}$, $B_{i}=\dfrac{\partial B}{\partial x^{i}}$.
\end{thm}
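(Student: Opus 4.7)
The plan is to translate $\nabla S=0$ into a purely algebraic system in the Christoffel symbols of $g$ and then reduce that system to four first-order PDEs on $A$ and $B$.

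First I would exploit the crucial fact that the basis $\{e_i\}$ in which $S$ has the matrix \eqref{Q} is a coordinate basis: since $A$ and $B$ are the smooth functions appearing in $g$, the $e_i$ must be $\partial/\partial x^i$, and the entries of $S$ are then literal constants ($0$ and $\pm 1$). Consequently $\partial_k S^i_j\equiv 0$, so
\[
\nabla_k S^i_j=\Gamma^i_{kl}S^l_j-\Gamma^l_{kj}S^i_l,
\]
and the tensor equation $\nabla S=0$ is equivalent to the $64$-entry algebraic system $\Gamma^i_{kl}S^l_j=\Gamma^l_{kj}S^i_l$. Because the nonzero positions of $S^l_j$ are exactly $(1,2),(2,3),(3,4),(4,1)$ (with signs $+,+,+,-$), each such equation picks out exactly one Christoffel symbol on each side. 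So the whole system is a set of equalities between individual $\Gamma$'s, of the schematic form $\pm\Gamma^i_{k,\sigma(j)}=\pm\Gamma^{\tau(i)}_{kj}$, where $\sigma$ and $\tau$ are the cyclic-with-sign permutations induced by $S$ on columns and rows respectively.

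Next I would compute the Christoffel symbols from \eqref{metricg}. The inverse $(g^{ij})$ has the same skew-circulant-compatible shape as $g$, with entries that are explicit rational functions of $A$ and $B$ (the denominator being the positive-definite expression $A^2-2B^2$ up to factors, which is nonzero by the hypothesis $A>\sqrt2\,B>0$). Then $\Gamma^k_{ij}=\tfrac12 g^{kl}(\partial_ig_{jl}+\partial_jg_{il}-\partial_lg_{ij})$ yields each $\Gamma^k_{ij}$ as a linear combination of the eight partial derivatives $A_i,B_i$ with coefficients in these rational functions of $A,B$. I would not write out all forty independent symbols: thanks to the cyclic structure of $g$ (columns/rows shift cyclically with a sign), it suffices to compute one representative in each orbit of the cyclic symmetry and propagate.

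Finally I would substitute these expressions into the reduced system $\Gamma^i_{kl}S^l_j=\Gamma^l_{kj}S^i_l$. After collecting terms, the rational denominators cancel on both sides of each equation (this cancellation is forced by the invariance of the skew-circulant structure under the action of $S$), and what remains is a linear system in $A_i,B_i$ with integer coefficients. The cyclic action of $S$ organizes the $64$ equations into four orbits, each orbit collapsing to one of the four claimed identities $A_1=B_2-B_4$, $A_2=B_1+B_3$, $A_3=B_2+B_4$, $A_4=B_3-B_1$. The converse direction is immediate: assuming the four identities, the same computation runs in reverse and shows every $\nabla_kS^i_j$ vanishes. The main obstacle is purely bookkeeping — keeping track of the many Christoffel symbols and signs — so the key organizational device throughout is to use the cyclic symmetry $e_1\mapsto -e_4\mapsto -e_3\mapsto -e_2\mapsto -e_1$ of $S$ to cut the work by a factor of four.
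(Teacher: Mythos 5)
The paper does not actually prove Theorem~\ref{alm-keler}: it is imported verbatim from the reference \cite{dok-raz} (``are established in \cite{dok-raz}''), so there is no in-paper proof to compare against. Judged on its own, your strategy is the natural and correct one, and it is consistent with how the paper handles the analogous computations (e.g.\ the proof of Lemma~\ref{tF} uses exactly the coordinate formulas \eqref{defF} and \eqref{2.3}, confirming that $\{e_i\}$ is indeed treated as a coordinate frame, so your reduction of $\nabla_kS^i_j=0$ to $\Gamma^i_{kl}S^l_j=\Gamma^l_{kj}S^i_l$ is legitimate, as is the observation that each of the $64$ equations equates a single pair of Christoffel symbols). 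The final system you announce matches the theorem and is consistent with the paper's own data: setting the components \eqref{nablaf} to zero reproduces precisely $A_1=B_2-B_4$, $A_2=B_1+B_3$, $A_3=B_2+B_4$, $A_4=B_3-B_1$.

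One step is stated too optimistically. After clearing the common factor $1/(2D)$, an individual equation does \emph{not} have integer coefficients in the $A_i,B_i$; it has coefficients linear in $A$ and $B$. For instance, $\nabla_1S^1_1=0$ reads $\Gamma^1_{14}+\Gamma^2_{11}=0$, which after clearing denominators becomes
\begin{equation*}
A\,(A_4+2B_1-A_2)+B\,(A_3-B_2-B_4)=0 ,
\end{equation*}
and this single equation does not by itself force either bracket to vanish. To extract the four integer-coefficient conditions you must combine equations across the system (different choices of $k,i,j$ produce the same brackets paired with different $A,B$-coefficients) and invoke $A>\sqrt{2}B>0$, hence $D=A^2-2B^2\neq 0$, to see that the coefficient matrix has full rank on the relevant subspace. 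This is routine but it is a genuine step, not an automatic ``cancellation forced by the skew-circulant structure''; your write-up should either carry it out for one orbit and propagate by the (correctly identified) cyclic symmetry $e_i\mapsto e_{i+1}$, $x^i\mapsto x^{i+1}$ with $e_5=-e_1$, or at least state the rank argument explicitly. The sufficiency direction is, as you say, immediate by substitution.
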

\begin{thm} The structure $S$ on $(M, g, S)$ satisfies $\nabla S=0$ if and only if $\nabla J=0$, i.e. $(M, g, J)$ is a K\"{a}hler manifold.
\end{thm}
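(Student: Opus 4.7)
The forward direction is immediate from the Leibniz rule applied to $J = S^{2}$:
\[
\nabla_{x} J = (\nabla_{x} S)\, S + S\,(\nabla_{x} S),
\]
which vanishes whenever $\nabla S = 0$.

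For the converse, set $T_{x} := \nabla_{x} S$ and assume $\nabla J = 0$. The Leibniz identity then gives $T_{x} S + S T_{x} = 0$, so $T_{x}$ anticommutes with $S$. Differentiating \eqref{izometry} along $x$ yields $g(T_{x} y, Sz) + g(Sy, T_{x} z) = 0$; combined with the anticommutation and $S^{-2} = -J$, this simplifies to $T_{x}^{\ast} = -T_{x} J$, where $T_{x}^{\ast}$ denotes the $g$-adjoint of $T_{x}$.

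The plan is now to pass to the basis $\{e_{i}\}$ of Section~\ref{2}, in which $S$ and $J = S^{2}$ have constant components, and to translate the hypothesis into relations among the Christoffel symbols. Writing $(\nabla_{k} J)^{i}_{j} = \Gamma^{i}_{kl} J^{l}_{j} - \Gamma^{l}_{kj} J^{i}_{l} = 0$ with $J^{1}_{3} = J^{2}_{4} = 1$, $J^{3}_{1} = J^{4}_{2} = -1$ produces, for each $k$, the eight identities
\[
\Gamma^{1}_{k3}+\Gamma^{3}_{k1}=0,\ \Gamma^{1}_{k4}+\Gamma^{3}_{k2}=0,\ \Gamma^{1}_{k1}=\Gamma^{3}_{k3},\ \Gamma^{1}_{k2}=\Gamma^{3}_{k4},
\]
together with the four analogues obtained by shifting the index pair $(1,3) \to (2,4)$. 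Substituting the explicit Christoffel symbols of (\ref{metricg}), computed via the Koszul formula as in the proof of Theorem~\ref{alm-keler} in \cite{dok-raz}, converts these into a first-order PDE system on $A$ and $B$.

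The main obstacle is reducing this PDE system to the four equations
\[
A_{1}=B_{2}-B_{4},\ A_{2}=B_{1}+B_{3},\ A_{3}=B_{2}+B_{4},\ A_{4}=B_{3}-B_{1}
\]
of Theorem~\ref{alm-keler}. A priori the Christoffel-symbol system coming from $\nabla J = 0$ is weaker than the corresponding one for $\nabla S = 0$, but the extra identity $T_{x}^{\ast} = -T_{x} J$ supplies the missing relations. The computation is substantially shortened by the cyclic-with-sign action $S e_{j+1} = e_{j}$ for $j<4$, $S e_{1} = -e_{4}$: this symmetry makes the four index blocks of the calculation parallel, so once the PDE corresponding to $(i,j)=(1,1)$ is verified, the remaining three follow by the same shift, and an application of Theorem~\ref{alm-keler} finishes the argument.
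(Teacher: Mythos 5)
Your forward direction is correct and complete: $\nabla_{x}J=(\nabla_{x}S)S+S(\nabla_{x}S)$, so $\nabla S=0$ forces $\nabla J=0$. For context, the paper itself states this theorem without proof, quoting it from \cite{dok-raz}; the ingredients for a self-contained proof inside the paper are Theorem~\ref{alm-keler} together with the computation in Lemma~\ref{tj}.

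The converse direction has a genuine gap. What you give is a plan (``translate $\nabla J=0$ into Christoffel relations, substitute the Christoffel symbols of \eqref{metricg}, reduce to the four PDEs of Theorem~\ref{alm-keler}'') plus the assertion that the reduction succeeds because the identity $T_{x}^{*}=-T_{x}J$ ``supplies the missing relations.'' That assertion fails on two counts. First, the relation $g(T_{x}y,Sz)+g(Sy,T_{x}z)=0$ is automatic for the Levi-Civita connection of any metric satisfying \eqref{izometry}; it is already encoded in the Christoffel symbols, so it cannot add information to the system you propose to solve. Second, and decisively, the two pointwise conditions you extract from $\nabla J=0$, namely $T_{x}S+ST_{x}=0$ and $T_{x}^{*}=-T_{x}J$, do not force $T_{x}=0$: working in an orthonormal $S$-basis (which exists by the results recalled in Section~\ref{5}), their general common solution is the two-parameter family
\begin{equation*}
T=\begin{pmatrix} a & 0 & a & d\\ d & -a & 0 & -a\\ -a & -d & a & 0\\ 0 & a & d & -a\end{pmatrix},
\end{equation*}
which contains nonzero members. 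Hence no pointwise linear algebra of the kind you set up can close the argument; the equivalence holds only because of the specific first-order PDE content of the metric \eqref{metricg}, i.e.\ because the components of $\nabla J$ are not arbitrary tensors of that algebraic type but are expressed through the eight derivatives $A_{i},B_{i}$. The missing step is exactly the computation recorded in Lemma~\ref{tj}: every nonzero component of $\nabla J$ equals $\pm\frac{1}{2}$ times one of the four quantities $A_{2}-B_{1}-B_{3}$, $A_{4}-B_{3}+B_{1}$, $A_{3}-B_{2}-B_{4}$, $A_{1}-B_{2}+B_{4}$, so $\nabla J=0$ is equivalent to the four equations of Theorem~\ref{alm-keler} and therefore to $\nabla S=0$. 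Until you actually carry out that computation (or invoke Lemma~\ref{tj}), the converse remains unproved, and the heuristic you offer in its place is not merely incomplete but points in a direction that provably cannot work.
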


\section{The fundamental tensor $F$ on $(M, g, S)$}\label{4}
In this section we obtain a characteristic property of the tensor $F$ on $(M, g, S)$, which is an analogue of the property \eqref{J} of $\nabla J$ on $(M, g, J)$.
For this purpose we calculate the components of $F$.
\begin{lemma}\label{tF}
 The components $F_{ijk}=F(e_{i}, e_{j}, e_{k})$ of the fundamental tensor $F$ on the manifold $(M, g, S)$ are given by
\begin{equation}\label{nablaf}
\begin{array}{ll}
 F_{111}=A_{2}-A_{4}-2B_{1}, & F_{112}=F_{114}=\frac{1}{2}(A_{3}-B_{2}-B_{4}),\\
 F_{122}=F_{322}=B_{1}+B_{3}-A_{2}, & F_{113}=-F_{324}=\frac{1}{2}(A_{2}+A_{4}-2B_{3}),\\
 F_{133}=F_{311}=0, & F_{134}= -F_{123}=\frac{1}{2}(A_{3}-B_{2}-B_{4}),\\
 F_{211}=-F_{411}=B_{2}-B_{4}-A_{1}, & F_{124}=F_{313}=\frac{1}{2}(A_{2}-2B_{1}-A_{4}),\\
 F_{222}=A_{1}-2B_{2}+ A_{3}, & F_{213}=-F_{424}=\frac{1}{2}(2B_{2}-A_{1}-A_{3}),\\
 F_{244}=F_{422}=0,& F_{212}=F_{214}=\frac{1}{2}(B_{3}-B_{1}-A_{4}),\\
F_{233}=F_{433}=B_{2}+B_{4}- A_{3}, & F_{234}=-F_{223}=\frac{1}{2}(B_{3}-B_{1}-A_{4}),\\
  F_{333}=-2B_{3}+A_{2}+A_{4}, & F_{224}=F_{413}=\frac{1}{2}(A_{3}-2B_{4}-A_{1}),\\
   F_{312}=F_{314}=\frac{1}{2}(B_{2}-B_{4}-A_{1}), & F_{334}=-F_{323}=\frac{1}{2}(B_{2}-B_{4}-A_{1}),\\
 F_{344}=-F_{144}=B_{3}-B_{1}- A_{4},  &  F_{412}=F_{414}=\frac{1}{2}(A_{2}-B_{1}-B_{3}),\\
F_{444}=-2B_{4}-A_{1}+ A_{3}, & F_{434}=-F_{423}=\frac{1}{2}(A_{2}-B_{1}-B_{3}).\\
\end{array}
\end{equation}
\end{lemma}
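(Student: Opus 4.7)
The plan is to establish \eqref{nablaf} by a direct coordinate computation from the definitions \eqref{metricf} and \eqref{F}. First, I would write out the matrix $\tilde{g}_{ij}=\tilde{g}(e_{i},e_{j})$ in closed form. Reading the action of $S$ on the basis from \eqref{Q}, namely $Se_{1}=-e_{4}$, $Se_{2}=e_{1}$, $Se_{3}=e_{2}$, $Se_{4}=e_{3}$, and substituting into \eqref{metricf} together with \eqref{metricg}, one obtains a matrix with the same zero/sign pattern as $(g_{ij})$ but with the diagonal entries $A$ replaced by $2B$ and each off-diagonal entry $\pm B$ replaced by $\pm A$. This reduces the first summand of $F_{ijk}$ to partial derivatives of $A$ and $B$.

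Second, I would compute the Christoffel symbols of the Levi-Civita connection of $g$ via
\[
\Gamma^{k}_{ij} \;=\; \tfrac{1}{2}\, g^{k\ell}\bigl(\partial_{i}g_{j\ell} + \partial_{j}g_{i\ell} - \partial_{\ell}g_{ij}\bigr).
\]
This requires inverting the $4\times 4$ matrix \eqref{metricg}; the inverse $(g^{ij})$ is an explicit rational expression in $A,B$ and $\det(g_{ij})$ and inherits the same kind of symmetry pattern as $(g_{ij})$. The partial derivatives of $g_{ij}$ are just $A_{i}$, $\pm B_{i}$, or $0$, so the $\Gamma^{k}_{ij}$ are linear combinations of $A_{i},B_{i}$ with coefficients in the entries of $(g^{ij})$.

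Third, I would evaluate
\[
F_{ijk} \;=\; \partial_{i}\tilde{g}_{jk} - \Gamma^{\ell}_{ij}\,\tilde{g}_{\ell k} - \Gamma^{\ell}_{ik}\,\tilde{g}_{j\ell}
\]
for each index triple appearing in \eqref{nablaf}, using \eqref{prop-F}, $F_{ijk}=F_{ikj}$, to eliminate roughly half of the cases. The expected simplification that turns the raw rational expressions into the clean linear combinations of $A_{i},B_{i}$ displayed in \eqref{nablaf} comes from massive cancellation between the $\Gamma^{\ell}_{ij}\tilde{g}_{\ell k}$ and $\Gamma^{\ell}_{ik}\tilde{g}_{j\ell}$ terms; in particular all dependence on $\det(g_{ij})$ and on the entries of $(g^{ij})$ must drop out of the final expressions.

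The main obstacle is therefore not conceptual but bookkeeping: the $\Gamma^{k}_{ij}$ are lengthy expressions, and each $F_{ijk}$ is a substantial linear combination of them, so one must verify that the non-polynomial pieces truly cancel. A useful tactic for shortening the work is to exploit the partial cyclic symmetry induced by the skew-circulant form of $S$: many triples $(i,j,k)$ listed in \eqref{nablaf} are paired by a visible permutation symmetry (e.g.\ $F_{112}=F_{114}$, $F_{122}=F_{322}$, $F_{233}=F_{433}$), and these coincidences should be used both as computational shortcuts and as internal consistency checks on the calculation.
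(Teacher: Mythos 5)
Your proposal is correct and follows essentially the same route as the paper: compute $\tilde{g}_{ij}$ from \eqref{Q} and \eqref{metricg} (obtaining exactly the matrix with diagonal $2B$ and off-diagonal entries $\pm A$), invert $(g_{ij})$ (the paper writes the inverse with denominator $D=A^{2}-2B^{2}$), form the Christoffel symbols via \eqref{2.3}, and substitute into $F_{ijk}=\partial_{i}\tilde{g}_{jk}-\Gamma^{a}_{ij}\tilde{g}_{ak}-\Gamma^{a}_{ik}\tilde{g}_{aj}$, using the symmetry \eqref{prop-F} to cut the casework. The only difference is that the paper simply asserts the resulting components after this direct computation, while you additionally flag the expected cancellation of the $1/D$ factors and the use of the skew-circulant symmetries as consistency checks.
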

\begin{proof}
The inverse matrix of $(g_{ij})$ has the form:
\begin{equation}\label{g-obr}
    (g^{ik})=\frac{1}{D}\begin{pmatrix}
            A& -B & 0 & B\\
            -B& A& -B & 0\\
            0 &-B & A & -B\\
            B & 0 & -B & A\\
            \end{pmatrix},
  \end{equation}
where $D=A^{2}-2B^{2}$.

Using \eqref{Q} and \eqref{metricg} we get that the matrix $\tilde{g}$, determined by \eqref{metricf}, is
of the type:
\begin{equation}\label{tilde-g}
(\tilde{g}_{ij})=\begin{pmatrix}
      2B & A & 0 & -A\\
      A & 2B & A & 0\\
      0 & A & 2B & A\\
      -A & 0 & A & 2B\\
    \end{pmatrix}.
\end{equation}
Due to \eqref{F} the components of $F$ are $F_{ijk}=\nabla_{i}\tilde{g}_{jk}$.
We apply to $\tilde{g}$ the following well-known formula for the covariant derivative of tensors:
\begin{equation}\label{defF}
\nabla_{i}\tilde{g}_{jk}=\partial_{i}\tilde{g}_{jk}-\Gamma_{ij}^{a}\tilde{g}_{ak}-\Gamma_{ik}^{a}\tilde{g}_{aj}.
\end{equation}
Here $\Gamma_{ij}^{s}$ are the Christoffel symbols of $\nabla$. They are determined by
\begin{equation}\label{2.3}
2\Gamma_{ij}^{k}=g^{ak}(\partial_{i}g_{aj}+\partial_{j}g_{ai}-\partial_{a}g_{ij}).
\end{equation}
Then, with the help of \eqref{metricg}, \eqref{F}, \eqref{prop-F}, \eqref{g-obr}, \eqref{tilde-g} and \eqref{defF} we calculate the components of $F$, given in \eqref{nablaf}.
\end{proof}

Immediately, we have the following
\begin{cor}\label{t-theta}
 The components $\theta_{k}=g^{ij}F(e_{i}, e_{j}, e_{k})$ of the 1-form $\theta$ on the manifold $(M, g, S)$ are expressed by the equalities
 \begin{equation}\label{theta}
\begin{split}
\theta_{1}&=\frac{2}{D}\big(A(A_{2}-A_{4}-2B_{1})-2B(B_{2}-B_{4}-A_{1})\big),\\
\theta_{2}&=\frac{2}{D}\big(A(A_{1}+A_{3}-2B_{2})-2B(B_{1}+B_{3}-A_{2})\big),\\
\theta_{3}&=\frac{2}{D}\big(A(A_{2}+A_{4}-2B_{3})+2B(B_{2}+B_{4}-A_{3})\big),\\
\theta_{4}&=\frac{2}{D}\big(A(A_{3}-A_{1}-2B_{4})+2B(B_{3}-B_{1}-A_{4})\big).
\end{split}
\end{equation}
\end{cor}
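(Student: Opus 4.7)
The proof is a direct contraction of the components computed in Lemma~\ref{tF} with the inverse metric from \eqref{g-obr}. By definition $\theta_k=g^{ij}F_{ijk}$, so the plan is simply to expand this double sum for each $k\in\{1,2,3,4\}$ and simplify.

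First I would record, for each fixed $k$, the full list of the sixteen summands $g^{ij}F_{ijk}$. Two structural features of the data make this feasible by hand. On the one hand, the inverse metric in \eqref{g-obr} is sparse: each row of $(g^{ij})$ contains exactly three nonzero entries ($A/D$ on the diagonal and $\pm B/D$ off-diagonal), so only twelve of the sixteen products are nonzero. On the other hand, the symmetry $F_{ijk}=F_{ikj}$ from \eqref{prop-F} lets me rewrite $F_{ijk}$ (with $j\neq k$) in terms of entries that are actually tabulated in \eqref{nablaf}; in particular, for $k=1$ it gives $F_{i j1}=F_{i 1 j}$, and analogously for $k=2,3,4$. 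After these reductions each $\theta_k$ becomes a linear combination of about a dozen entries from the table \eqref{nablaf} with coefficients drawn from $\{A/D,\pm B/D\}$.

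Next I would substitute the explicit expressions from Lemma~\ref{tF}, separate the $A$-part from the $B$-part (using the fact that diagonal entries of $(g^{ij})$ carry $A/D$ and off-diagonal entries carry $\pm B/D$), and collect terms in $A_i$ and $B_i$. For $k=1$, for example, the diagonal contribution $\frac{A}{D}\bigl(F_{111}+F_{221}+F_{331}+F_{441}\bigr)$ produces the factor $A(A_2-A_4-2B_1)$ after cancellations among $F_{221}=F_{212}$, $F_{331}=F_{313}$ and $F_{441}=F_{414}$ (the last three sum to zero once written out), while the off-diagonal contribution assembles into $-2B(B_2-B_4-A_1)$. This gives the stated formula for $\theta_1$; the computations for $\theta_2,\theta_3,\theta_4$ are entirely analogous, using cyclic-type symmetries present in the skew-circulant setup.

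The step I expect to be the main obstacle is purely the bookkeeping: there are many terms with easily confused signs, especially among the off-diagonal contributions of $(g^{ij})$ coupled with the $\pm$ pairs like $F_{134}=-F_{123}$ or $F_{213}=-F_{424}$ in \eqref{nablaf}. Once the cancellations are carried out correctly, comparison with \eqref{theta} is immediate, and no further argument is needed.
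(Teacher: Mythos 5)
Your approach is exactly the paper's: the proof there is just ``direct computation'' of $\theta_k=g^{ij}F_{ijk}$ from \eqref{nablaf} and \eqref{g-obr}, and your plan (exploit the sparsity of $(g^{ij})$ and the symmetry \eqref{prop-F} to reduce to tabulated entries, then collect $A$- and $B$-parts) is the right way to organize it. One concrete correction, though: in your sample verification for $k=1$ the three terms do \emph{not} cancel. Using \eqref{nablaf} one finds
\begin{equation*}
F_{221}+F_{331}+F_{441}=F_{212}+F_{313}+F_{414}=\tfrac{1}{2}\bigl(2A_{2}-2A_{4}-4B_{1}\bigr)=A_{2}-A_{4}-2B_{1}=F_{111},
\end{equation*}
so the diagonal contribution is $\frac{2A}{D}(A_{2}-A_{4}-2B_{1})$, not $\frac{A}{D}(A_{2}-A_{4}-2B_{1})$; this doubling is precisely the source of the overall factor $\frac{2}{D}$ in \eqref{theta} (the off-diagonal part likewise assembles to $-\frac{4B}{D}(B_{2}-B_{4}-A_{1})$). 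With that cancellation pattern corrected, the rest of your computation goes through as described.
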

\begin{proof}
  The proof follows from \eqref{nablaf} and \eqref{g-obr} by direct computations.
 \end{proof}
\begin{cor}\label{t-theta*}
 The components $\theta^{*}_{k}=g^{ij}F(e_{i}, Se_{j}, e_{k})$ of the 1-form $\theta^{*}$ on the manifold $(M, g, S)$ are expressed by the equalities
\begin{equation}\label{theta*}
\begin{split}
\theta^{*}_{1}&=\frac{2}{D}\big(A(B_{2}-B_{4}-A_{1})-B(A_{2}-A_{4}-2B_{1})\big),\\
\theta^{*}_{2}&=\frac{2}{D}\big(A(B_{1}+B_{3}-A_{2})-B(A_{1}+A_{3}-2B_{2})\big),\\
\theta^{*}_{3}&=\frac{2}{D}\big(A(B_{2}+B_{4}-A_{3})-B(A_{2}+A_{4}-2B_{3})\big),\\
\theta^{*}_{4}&=\frac{2}{D}\big(A(B_{3}-B_{1}-A_{4})-B(A_{3}-A_{1}-2B_{4})\big).
\end{split}
\end{equation}
 \end{cor}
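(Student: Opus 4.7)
The plan is to reduce $\theta^{*}_{k}$ to a finite sum of already-computed quantities and then simplify. First I would use the explicit skew-circulant form \eqref{Q} to rewrite the action of $S$ on the basis: from the entries of $(S^{k}_{j})$ one reads off $Se_{1}=-e_{4}$, $Se_{2}=e_{1}$, $Se_{3}=e_{2}$, $Se_{4}=e_{3}$. Substituting into the definition gives
\begin{equation*}
\theta^{*}_{k}=g^{ij}F(e_{i},Se_{j},e_{k})=-g^{i1}F_{i4k}+g^{i2}F_{i1k}+g^{i3}F_{i2k}+g^{i4}F_{i3k}.
\end{equation*}

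Next I would plug in the inverse metric components from \eqref{g-obr} and the components of $F$ from Lemma~\ref{tF}. Because each column of $(g^{ij})$ has only three nonzero entries of the shape $A/D$, $\pm B/D$, the sum over $i$ for each fixed pair $(j,k)$ collapses to at most three terms. I would then group the resulting $48$ summands according to whether they are multiplied by $A/D$ or by $\pm B/D$ and, for each fixed $k\in\{1,2,3,4\}$, collect the coefficients of $A$ and $B$ separately. The symmetry \eqref{prop-F} would be useful to pair terms of the form $F_{iab}$ and $F_{iba}$ if they appear.

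Finally, I would verify that for each $k$ the grouping reproduces exactly the right-hand side of \eqref{theta*}: the coefficient of $A$ in $\theta^{*}_{k}$ should be twice the expression in the first parenthesis, and the coefficient of $-B$ should be twice the expression in the second parenthesis. A useful consistency check along the way is to compare the computation with the analogous one in Corollary~\ref{t-theta}: the two 1-forms $\theta$ and $\theta^{*}$ are built from the same $F_{ijk}$ and $g^{ij}$ but with a twist by $S$, so the linear combinations of $A_{i}$ and $B_{i}$ appearing as coefficients of $A$ and $B$ in $\theta^{*}_{k}$ are, roughly speaking, those that appear in $\theta_{k}$ interchanged and rescaled.

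The main obstacle is not conceptual but combinatorial: the book-keeping of signs and indices across the four cases $k=1,2,3,4$ is delicate, because several components of $F$ in \eqref{nablaf} are equal up to sign, and the skew-circulant character of $S$ propagates these signs through each partial sum. I would therefore organize the calculation in a table: one row per term $g^{ij}F_{i\,a\,k}S^{a}_{j}$, one column for the contribution to the $A$-coefficient and one for the $B$-coefficient, and then read off \eqref{theta*} directly.
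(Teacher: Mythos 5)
Your proposal is correct and follows essentially the same route as the paper, whose proof is precisely the direct computation "using \eqref{Q}, \eqref{nablaf} and \eqref{g-obr}": you contract $g^{ij}S^{a}_{j}F_{iak}$ with the correct reading $Se_{1}=-e_{4}$, $Se_{2}=e_{1}$, $Se_{3}=e_{2}$, $Se_{4}=e_{3}$, and the bookkeeping you describe is exactly what is needed. The only difference is that you make the organizational details explicit where the paper leaves them implicit.
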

 \begin{proof}
 Using \eqref{Q}, \eqref{nablaf} and \eqref{g-obr}, we find \eqref{theta*}.
 \end{proof}

Having in mind Lemma~\ref{tF}, Corollary~\ref{t-theta} and Corollary~\ref{t-theta*} we get the next statements.

\begin{thm}\label{tw1}
The fundamental tensor $F$ on the manifold $(M, g, S)$ satisfies the identity
\begin{equation}\label{c1}
\begin{split}
 F(x,y,z)=&\frac{1}{4}\big\{g(x,y)\theta(z)+g(x,z)\theta(y)+\big(g(Sx,y)+g(x, Sy)\big)\theta^{*}(z)\\&+\big(g(Sx,z)+g(x, Sz)\big)\theta^{*}(y)\big\}.
\end{split}
\end{equation}
\end{thm}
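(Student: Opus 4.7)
By multilinearity of $F$, $g$, $\tilde g$, $\theta$ and $\theta^*$, the identity \eqref{c1} is equivalent to its version on basis vectors $e_i,e_j,e_k$. Writing $\tilde g_{ij}=g(Se_i,e_j)+g(e_i,Se_j)$, the content of \eqref{c1} is exactly
\begin{equation*}
4F_{ijk} = g_{ij}\theta_k + g_{ik}\theta_j + \tilde g_{ij}\theta^*_k + \tilde g_{ik}\theta^*_j
\end{equation*}
for every $i,j,k\in\{1,2,3,4\}$. All quantities on the right are already explicit: $g_{ij}$ from \eqref{metricg}, $\tilde g_{ij}$ from \eqref{tilde-g}, $\theta_k$ from \eqref{theta}, $\theta^*_k$ from \eqref{theta*}, and the left-hand side is given by Lemma~\ref{tF}. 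Thus the whole statement reduces to a direct componentwise check.

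To keep that check manageable, I would first use \eqref{prop-F}, which matches the evident symmetry of the right-hand side in the pair $(j,k)$, to restrict to $j\le k$; this cuts the $64$ triples down to $4\times 10 = 40$. A second reduction uses the skew-circulant nature of $S$: the basis shift $e_i\mapsto e_{i+1\,\mathrm{mod}\,4}$, together with the sign tracked by $S^4=-\id$, acts on the derivatives $A_i,B_i$ in a way that is clearly respected by the tables \eqref{nablaf}, \eqref{theta} and \eqref{theta*}. This groups the $40$ cases into a small number of orbits, and only one representative of each orbit must be verified.

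In every case the underlying mechanism is the same. The factor $\tfrac{1}{D}=\tfrac{1}{A^2-2B^2}$ carried by $\theta$ and $\theta^*$ has to cancel, and it does so because the entries of $g$ and $\tilde g$ (only $0,\pm A,\pm B,\pm 2B$ appear) combine with the $A^2$- and $B^2$-coefficients inside $\theta_k,\theta^*_k$ to produce exactly $A^2-2B^2=D$. What remains is a polynomial in the $A_i,B_i$, which then must coincide with \eqref{nablaf}. For instance, at $i=j=k=1$ the identity collapses to $\tfrac{1}{2}(A\theta_1+2B\theta^*_1)=F_{111}$, and after the $D$-cancellation one obtains precisely $A_2-A_4-2B_1$.

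The only real obstacle is organizational rather than conceptual: one must carry out the case analysis carefully across the representative orbits without arithmetic slips. The $j\leftrightarrow k$ symmetry of \eqref{prop-F} together with the cyclic shift symmetry induced by $S$ is what keeps this bookkeeping tractable.
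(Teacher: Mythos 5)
Your proposal is correct and follows essentially the same route as the paper: the paper likewise proves \eqref{c1} by establishing its local form $F_{kij}=\frac{1}{4}\bigl(g_{kj}\theta_{i}+g_{ki}\theta_{j}+\tilde{g}_{kj}\theta^{*}_{i}+\tilde{g}_{ki}\theta^{*}_{j}\bigr)$ through direct componentwise comparison of \eqref{nablaf}, \eqref{theta} and \eqref{theta*}, exactly the cancellation of $D=A^{2}-2B^{2}$ you illustrate for $F_{111}$. Your symmetry-based reduction of the case count is a sensible organizational refinement but not a different argument.
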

\begin{proof}
Using  \eqref{Q}, \eqref{metricg}, \eqref{nablaf},  \eqref{tilde-g}, \eqref{theta} and \eqref{theta*} we obtain
 \begin{equation}\label{usl-w1}
 \begin{split}
  F_{kij}=\frac{1}{4}\big(g_{kj}\theta_{i}+g_{ki}\theta_{j}+\tilde{g}_{kj}\theta^{*}_{i}+\tilde{g}_{ki}\theta^{^{*}}_{j}\big),\\
  \end{split}
\end{equation}
which is equivalent to \eqref{c1}.
\end{proof}
\begin{thm}\label{tw0}
The fundamental tensor $F$ on the manifold $(M, g, S)$ has the property
\begin{equation}\label{c3}
\begin{split}
 F(x,Jy,Jz)+F(y,Jz,Jx)+F(z,Jx,Jy)=0,
\end{split}
\end{equation}
where $J=S^{2}$.
\end{thm}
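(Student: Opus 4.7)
The plan is to reduce \eqref{c3} to a purely algebraic identity by invoking the explicit formula from Theorem~\ref{tw1}, and then to cancel every summand term-by-term using the $J$-antisymmetry of $g$ and $\tilde g$.

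First I would record two elementary facts about $J = S^{2}$. Since $S$ satisfies \eqref{izometry} and $S^{4} = -\id$, the relation $S^{-2} = -S^{2} = -J$ together with the equivalent form $g(Sa, b) = g(a, S^{-1}b)$ of the isometry condition yields
\begin{equation*}
g(Jx, y) = -g(x, Jy), \qquad \tilde g(Jx, y) = -\tilde g(x, Jy);
\end{equation*}
the first is immediate, and the second follows from the definition \eqref{metricf} by a short $S$-computation using $S^{-3} = -S$. Note also that, because $g(Sx, y) + g(x, Sy) = \tilde g(x, y)$ by \eqref{metricf}, the identity \eqref{c1} of Theorem~\ref{tw1} can be rewritten compactly as
\begin{equation*}
F(x, y, z) = \tfrac{1}{4}\bigl\{ g(x, y)\theta(z) + g(x, z)\theta(y) + \tilde g(x, y)\theta^{*}(z) + \tilde g(x, z)\theta^{*}(y) \bigr\}.
\end{equation*}

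Next I would substitute the triples $(x, Jy, Jz)$, $(y, Jz, Jx)$, $(z, Jx, Jy)$ into this compact form and add the three expressions. Grouping the resulting twelve scalar terms according to which of $\theta(Jx), \theta(Jy), \theta(Jz), \theta^{*}(Jx), \theta^{*}(Jy), \theta^{*}(Jz)$ appears as coefficient, every coefficient has one of the shapes
\begin{equation*}
g(b, Jc) + g(c, Jb) \quad \text{or} \quad \tilde g(b, Jc) + \tilde g(c, Jb),
\end{equation*}
where $\{b, c\}$ is the complementary pair inside $\{x, y, z\}$. The symmetry of $g$ and $\tilde g$ together with the $J$-antisymmetry displayed above makes each such coefficient vanish identically, and \eqref{c3} follows at once.

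The one point requiring careful verification is the $J$-antisymmetry of $\tilde g$, because $\tilde g$ is built from $S$ rather than from $J$, and the check genuinely uses $S^{4} = -\id$ and not merely \eqref{izometry}. Beyond that I do not anticipate a substantive obstacle: the computation is short and the cancellations are structural, relying only on the compact form of \eqref{c1} and on the two skew-symmetry relations above.
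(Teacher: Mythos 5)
Your argument is correct, but it is not the route the paper takes. The paper's proof of Theorem~\ref{tw0} is a direct verification: it plugs the explicit components \eqref{nablaf} of $F$ (together with \eqref{Q} and the symmetry \eqref{prop-F}) into the cyclic sum and checks that everything cancels. You instead derive \eqref{c3} as a formal consequence of the identity \eqref{c1} of Theorem~\ref{tw1}: after rewriting $g(Sx,y)+g(x,Sy)=\tilde g(x,y)$ via \eqref{metricf}, the cyclic sum organizes into six coefficients of the shape $g(b,Jc)+g(c,Jb)$ or $\tilde g(b,Jc)+\tilde g(c,Jb)$, each of which vanishes by the symmetry of $g$ and $\tilde g$ together with $g(Jx,y)=-g(x,Jy)$ and $\tilde g(Jx,y)=-\tilde g(x,Jy)$. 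Both antisymmetries check out (the one for $\tilde g$ indeed requires $S^{4}=-\id$, e.g.\ via $g(S^{3}x,y)=-g(x,Sy)$, as you note), and since Theorem~\ref{tw1} precedes Theorem~\ref{tw0} there is no circularity. What your approach buys is conceptual content: it shows that \eqref{c3} holds for \emph{any} tensor of the form \eqref{c1} with symmetric $J$-anti-invariant bilinear forms as coefficients, so in particular it transfers immediately to the conformally transformed tensor $\overline F$ of Theorem~\ref{connF-lineF}. What the paper's computation buys is independence from Theorem~\ref{tw1}, at the cost of being an unilluminating coordinate check.
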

\begin{proof}
Due to \eqref{Q}, \eqref{F}, \eqref{prop-F} and \eqref{nablaf} we get that \eqref{c3} holds true.
\end{proof}
\begin{thm}\label{connF-lineF}
Under the conformal transformation
 \begin{equation}\label{conf}\overline{g}(x, y) = \alpha g(x, y),\end{equation}
where $\alpha$ is a smooth positive function, the tensor $F$ is transformed into the tensor
\begin{equation}\label{overlineF}
\begin{split}
 \overline{F}(x,y,z)=&\frac{1}{4}\big\{\overline{g}(x,y)\overline{\theta}(z)+\overline{g}(x,z)\overline{\theta}(y)+\big(\overline{g}(Sx,y)+\overline{g}(x, Sy)\big)\overline{\theta}^{*}(z)\\&+\big(\overline{g}(Sx,z)+\overline{g}(x, Sz)\big)\overline{\theta}^{*}(y)\big\}
 \end{split}
\end{equation}
with  $\overline{\theta}=\theta+\dfrac{2}{\alpha}\mathrm{d}\alpha\circ (S- S^{3})$ and $\overline{\theta}^{*}=\theta^{*}-\dfrac{2}{\alpha}\mathrm{d}\alpha.$
\end{thm}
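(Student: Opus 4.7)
The plan is to exploit the fact that the conformal scaling $\overline{g}=\alpha g$ preserves the structural setup of the paper, so that Theorem~\ref{tw1} can be applied directly to the rescaled triple $(M,\overline{g},S)$. Indeed, $\overline{g}(Sx,Sy)=\alpha g(Sx,Sy)=\alpha g(x,y)=\overline{g}(x,y)$, so the compatibility \eqref{izometry} still holds; the matrix of $\overline{g}$ has the shape \eqref{metricg} with $\overline{A}=\alpha A$ and $\overline{B}=\alpha B$; and the associated tensor is $\overline{\tilde{g}}=\alpha\tilde{g}$. Theorem~\ref{tw1} applied to $(M,\overline{g},S)$ thus instantly delivers identity \eqref{overlineF} for $\overline{F}=\overline{\nabla}\,\overline{\tilde{g}}$, with the 1-forms $\overline{\theta}(x)=\overline{g}^{ij}\overline{F}(e_i,e_j,x)$ and $\overline{\theta}^*(x)=\overline{g}^{ij}\overline{F}(e_i,Se_j,x)$. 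The real content of the theorem therefore reduces to expressing these two 1-forms in the stated closed form.

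To compute them, I would write $\alpha=e^{2\varphi}$ (so $d\varphi=\tfrac{1}{2\alpha}d\alpha$) and use the classical conformal transformation of the Levi-Civita connection
$\overline{\nabla}_x y=\nabla_x y+d\varphi(x)y+d\varphi(y)x-g(x,y)\grad\varphi.$
Expanding $\overline{F}(x,y,z)=(\overline{\nabla}_x\overline{\tilde{g}})(y,z)$ via the Leibniz rule and noting that the $d\alpha(x)\tilde{g}(y,z)$ piece coming from differentiating the conformal factor cancels the two $d\varphi(x)\tilde{g}(y,z)$ contributions produced by $\overline{\nabla}$, one obtains
$\overline{F}(x,y,z)=\alpha\bigl\{F(x,y,z)-d\varphi(y)\tilde{g}(x,z)-d\varphi(z)\tilde{g}(x,y)+g(x,y)\tilde{g}(\grad\varphi,z)+g(x,z)\tilde{g}(\grad\varphi,y)\bigr\}.$
The crucial algebraic ingredient is the $g$-adjoint relation $g(Sa,b)=-g(a,S^3 b)$, which follows from \eqref{izometry} combined with $S^4=-\id$ and yields in particular $\tilde{g}(\grad\varphi,z)=d\varphi(Sz)-d\varphi(S^3 z)=d\varphi\circ(S-S^3)(z)$.

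The 1-forms $\overline{\theta}$ and $\overline{\theta}^*$ are then obtained by contracting the preceding display with $\overline{g}^{ij}=\alpha^{-1}g^{ij}$: the $\alpha$ factor cancels, and the remaining contractions collapse to elementary traces, namely $g^{ij}g_{ij}=4$, $g^{ij}\tilde{g}_{ij}=2\tr S=0$, and, for $\overline{\theta}^*$, $g^{ij}g(e_i,Se_j)=\tr S=0$ and $g^{ij}\tilde{g}(e_i,Se_j)=\tr S^2+4=4$, all of which are read off directly from \eqref{Q}. Collecting the surviving terms gives $\overline{\theta}(x)-\theta(x)=4\,d\varphi\circ(S-S^3)(x)$ and $\overline{\theta}^*(x)-\theta^*(x)=-4\,d\varphi(x)$; replacing $4d\varphi$ by $\tfrac{2}{\alpha}d\alpha$ then produces the announced formulas. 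The main bookkeeping obstacle is tracking the $S$-adjoint sign flips (in particular $S^{-1}=-S^3$ and $S^{-2}=-S^2$), which are responsible for converting several apparent $\tilde{g}$-contributions in $\overline{\theta}^*$ into pure $d\alpha$ terms; once these are tabulated at the outset, the rest is routine expansion.
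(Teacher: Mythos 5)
Your argument is correct, and it is organized in a genuinely different way from the paper's proof. The paper never invokes Theorem~\ref{tw1} for the rescaled metric: it computes $\overline{\nabla}\,\overline{\tilde{g}}$ in coordinates via the conformal change of the Christoffel symbols, substitutes the known expression \eqref{usl-w1} for $F$, and regroups the result (using \eqref{fi-and-s} and \eqref{theta-s-star-ref-placeholder-not-used} --- more precisely \eqref{fi-and-s} and the relation $\theta^{*}_{i}=-\tfrac12\Phi_{i}^{s}\theta_{s}$) until the right-hand side visibly has the shape \eqref{overlineF}. Your opening observation --- that $\overline{g}(Sx,Sy)=\overline{g}(x,y)$ and that $\overline{g}$ again has the form \eqref{metricg} with $\overline{A}=\alpha A$, $\overline{B}=\alpha B$ and $\overline{A}>\sqrt{2}\,\overline{B}>0$, so that Theorem~\ref{tw1} applies verbatim to $(M,\overline{g},S)$ --- gives the structural identity \eqref{overlineF} for free and reduces the theorem to identifying the two contractions $\overline{\theta}$ and $\overline{\theta}^{*}$. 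That is a real economy: what remains is the invariant formula for $\overline{F}-\alpha F$ from the standard conformal change of the Levi-Civita connection, the adjoint relation $g(Sa,b)=-g(a,S^{3}b)$ (equivalently $\tilde{g}(\grad\varphi,\cdot)=\mathrm{d}\varphi\circ(S-S^{3})$, the paper's $\Phi$), and the traces $\tr S=\tr S^{2}=\tr S^{3}=0$, $\tr(\Phi S)=4$; your tallies $\overline{\theta}-\theta=4\,\mathrm{d}\varphi\circ\Phi=\tfrac{2}{\alpha}\,\mathrm{d}\alpha\circ(S-S^{3})$ and $\overline{\theta}^{*}-\theta^{*}=-4\,\mathrm{d}\varphi=-\tfrac{2}{\alpha}\,\mathrm{d}\alpha$ are exactly right. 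The only thing the paper's regrouping makes explicit that your version leaves implicit is that the relation $\overline{\theta}^{*}=-\tfrac12\Phi\,\overline{\theta}$ persists after the transformation (it does: $\Phi^{2}=2\,\id$ turns $-\tfrac12\Phi(4\,\mathrm{d}\varphi\circ\Phi)$ into $-4\,\mathrm{d}\varphi$), but that is not needed for the statement. One presentational caution: cite which result you are reusing when you assert that the cross-terms in $\mathrm{d}\varphi(x)\tilde{g}(y,z)$ cancel, since that cancellation is what makes $\overline{F}-\alpha F$ depend only on the $y$- and $z$-slots, as required for the contraction step.

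(Note: the parenthetical reference to the relation between $\theta^{*}$ and $\theta$ is to the displayed identity $\theta^{*}_{i}=-\tfrac12\Phi_{i}^{s}\theta_{s}$ in the paper's proof.)
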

\begin{proof}
 The inverse matrix of $(\tilde{g}_{ij})$ has the form
\begin{equation}\label{g2-obr}
    (\tilde{g}^{ik})=\frac{1}{2D}\begin{pmatrix}
            -2B& A & 0 & -A\\
            A& -2B& A & 0\\
            0 &A & -2B & A\\
            -A & 0 & A & -2B\\
            \end{pmatrix}.
  \end{equation}
Bearing in mind \eqref{metricg}, \eqref{g-obr},  \eqref{tilde-g} and \eqref{g2-obr}, we get
 \begin{equation}\label{fi-and-s}
 \tilde{g}_{ij}g^{is}=\Phi_{j}^{s}\ ,\quad g_{ij}\tilde{g}^{is}=\frac{1}{2}\Phi_{j}^{s},
 \end{equation}
 where
\begin{equation}\label{S-Phi}
(\Phi_{j}^{s})=\begin{pmatrix}
      0 & 1 & 0 & -1\\
      1 & 0 & 1 & 0 \\
      0 & 1 & 0 & 1\\
      -1 & 0 & 1 & 0 \\
    \end{pmatrix}.
    \end{equation}
    Because of  \eqref{Q} and \eqref{S-Phi} we have $\Phi=S-S^{3}$.

Now, from \eqref{theta}, \eqref{theta*} and \eqref{S-Phi}, we find
 \begin{equation}\label{theta-s*}
 \begin{split}
  \theta^{*}_{i}=-\frac{1}{2}\Phi_{i}^{s}\theta_{s}.
  \end{split}
\end{equation}

According to the transformation \eqref{conf}, the components of the tensor $\overline{F}$ are $\overline{F}_{ijk}=\overline{\nabla}_{i}\overline{\tilde{g}}_{jk}$, where $\overline{\tilde{g}}=\alpha \tilde{g}$ and $\overline{\nabla}$ is the Levi-Civita connection of $\overline{g}$. Therefore, it follows \begin{equation}\label{barF}\overline{\nabla}\ \overline{\tilde{g}}=\alpha\overline{\nabla}\tilde{g}+\tilde{g}\overline{\nabla}\alpha.\end{equation}

From the Christoffel formulas \eqref{2.3} and
\begin{equation*}
 2\overline{\Gamma}^{k}_{ij}=\overline{g}^{ks}(\partial_{i}\overline{g}_{sj}+\partial_{j}\overline{g}_{si}-\partial_{s}\overline{g}_{ij}),
 \end{equation*}
 and due to \eqref{conf} we get
 \begin{equation*}
 \overline{\Gamma}^{k}_{ij}= \Gamma^{k}_{ij}+\frac{1}{2\alpha}(\delta_{j}^{k}\alpha_{i}+\delta_{i}^{k}\alpha_{j}-g_{ij}g^{ks}\alpha_{s}),\quad
 \alpha_{s}=\frac{\partial\alpha}{\partial x^{s}}.
 \end{equation*}
 Then, applying \eqref{defF} to $\overline{\nabla}\tilde{g}$ and using \eqref{fi-and-s}, we obtain
 \begin{equation}\label{F-barF}
 \begin{split}
  \overline{\nabla}_{k}\tilde{g}_{ji}=\nabla_{k}\tilde{g}_{ji} &-\frac{1}{2\alpha}(\tilde{g}_{ji}\alpha_{k}+\tilde{g}_{ik}\alpha_{j}-g_{kj}\Phi_{i}^{s}\alpha_{s})\\&-\frac{1}{2\alpha}(\tilde{g}_{kj}\alpha_{i}+\tilde{g}_{ij}\alpha_{k}-g_{ik}\Phi_{j}^{s}\alpha_{s}).
\end{split}
\end{equation}
Substituting \eqref{F-barF} into \eqref{barF}, and taking into account \eqref{usl-w1}, \eqref{fi-and-s},  \eqref{S-Phi} and \eqref{theta-s*}, we get
\begin{equation*}
 \begin{split}
  \overline{\nabla}_{k}\overline{\tilde{g}}_{ji}=&\frac{1}{4}\big\{\alpha g_{kj}\big(\theta_{i}+\frac{2\alpha_{s}}{\alpha}\Phi_{i}^{s}\big)+\alpha g_{ki}\big(\theta_{j}+\frac{2\alpha_{s}}{\alpha}\Phi_{j}^{s}\big)\\&+\alpha \tilde{g}_{kj}\big(\theta^{*}_{i}-\frac{2\alpha_{i}}{\alpha}\big)+\alpha \tilde{g}_{ki}\big(\theta^{*}_{j}-\frac{2\alpha_{j}}{\alpha}\big)\big\},
\end{split}
\end{equation*}
which implies
 \begin{equation*}
 \begin{split}
  \overline{F}_{kji}=&\frac{1}{4}\big(\overline{g}_{kj}\overline{\theta}_{i}+\overline{g}_{ki}\overline{\theta}_{j}+\overline{\tilde{g}}_{kj}\overline{\theta}^{*}_{i}+\overline{\tilde{g}}_{ki}\overline{\theta}^{*}_{j}\big),\\&
   \overline{\theta}_{i}=\theta_{i}+\frac{2}{\alpha}\Phi_{i}^{s}\alpha_{s},\quad \overline{\theta}^{*}_{i}=-\frac{1}{2}\Phi_{i}^{s}\overline{\theta}_{s}.
\end{split}
\end{equation*}
Thus, for $(M, \overline{g}, S)$, the identity \eqref{overlineF} is valid .
\end{proof}

 \begin{rem} According to Theorem~\ref{connF-lineF}, we can say that $(M, g, S)$ and $(M, \overline{g}, S)$ belong to classes of the same type, defined by the equality \eqref{c1} for the corresponding metric.
\end{rem}
Immediately, from \eqref{metricf}, \eqref{F}, \eqref{overlineF} and \eqref{fi-and-s} it follows
\begin{cor}
 If $F=0$ holds, then it is valid
 \begin{equation}\label{W0-glob}
 \begin{split}
  \overline{F}(x, y,z)=&\frac{1}{2}\big\{g(x, y)\alpha(\Phi z)+g(x, z)\alpha(\Phi y)-\tilde{g}(x, y)\alpha(z)\\&-\tilde{g}(x, z)\alpha(y)\big\}.
  \end{split}
\end{equation}
\end{cor}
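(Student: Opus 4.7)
The plan is to reduce equation \eqref{overlineF} from Theorem~\ref{connF-lineF} to the asserted form \eqref{W0-glob} by exploiting the hypothesis $F=0$. First I would observe that when $F$ vanishes identically on every triple of vector fields, both one-forms $\theta$ and $\theta^{*}$ must vanish as well: this is immediate from the definition \eqref{F} of $\theta$ as a $g$-trace of $F$, and from the analogous $g$-trace definition of $\theta^{*}$ recorded in Corollary~\ref{t-theta*}, which contracts $F$ with $g^{-1}$ after twisting the middle slot by $S$. Hence under $F=0$ the ``intrinsic'' contributions to the transformed one-forms drop out, and only the conformal pieces survive.

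Next I would invoke the explicit transformation formulas from Theorem~\ref{connF-lineF}: with $\theta=\theta^{*}=0$, they reduce to $\overline{\theta}=\tfrac{2}{\alpha}\mathrm{d}\alpha\circ\Phi$ (using the identification $\Phi=S-S^{3}$ noted in the proof of Theorem~\ref{connF-lineF}) and $\overline{\theta}^{*}=-\tfrac{2}{\alpha}\mathrm{d}\alpha$. Substituting these into \eqref{overlineF}, I would then use $\overline{g}=\alpha g$ from \eqref{conf} together with $\overline{g}(Sx,y)+\overline{g}(x,Sy)=\alpha\tilde{g}(x,y)$, which follows from applying definition \eqref{metricf} to the scaled metric (and is consistent with \eqref{fi-and-s}). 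The factors of $\alpha$ coming from $\overline{g}$ and $\overline{\tilde{g}}$ cancel the $\tfrac{1}{\alpha}$'s in $\overline{\theta}$ and $\overline{\theta}^{*}$, and the prefactor $\tfrac{1}{4}$ combines with the $2$'s appearing in those one-forms to produce the $\tfrac{1}{2}$ of \eqref{W0-glob}.

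I do not anticipate any genuine obstacle: the entire argument is bookkeeping once the vanishing of $\theta$ and $\theta^{*}$ is noted, and no independent computation from the component expressions \eqref{nablaf}, \eqref{theta}, \eqref{theta*} is needed. The only subtlety is notational --- the symbol $\alpha(v)$ appearing in \eqref{W0-glob} must be read as the directional derivative $v(\alpha)=\mathrm{d}\alpha(v)$ --- and with this convention the substitution sketched above reproduces \eqref{W0-glob} term by term, completing the proof.
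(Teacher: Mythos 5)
Your proposal is correct and matches the paper's intended argument: the paper gives no separate proof, stating only that \eqref{W0-glob} follows immediately from \eqref{metricf}, \eqref{F}, \eqref{overlineF} and \eqref{fi-and-s}, and your substitution of $\theta=\theta^{*}=0$ into the transformed one-forms of Theorem~\ref{connF-lineF}, followed by the cancellation of the $\alpha$ factors, is exactly that computation. The notational reading of $\alpha(v)$ as $\mathrm{d}\alpha(v)$ is also the one the paper uses in the local form \eqref{W0}.
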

Next, we obtain
\begin{cor}
 If $F=0$ holds, then $\overline{F}$ vanishes if and only if $\alpha$ is a constant.
\end{cor}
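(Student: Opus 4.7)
The plan is to split the biconditional into its two directions and, rather than unpack the full tensor identity \eqref{W0-glob} by hand, to exploit the transformation rules already proved in Theorem~\ref{connF-lineF}.

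The ($\Leftarrow$) direction is the easy one. If $\alpha$ is a constant function, every partial derivative $\alpha_{s}=\partial\alpha/\partial x^{s}$ vanishes, so both $\alpha(z)$ and $\alpha(\Phi z)$ vanish for every vector $z$. Substituting this into \eqref{W0-glob} kills all four summands on the right-hand side, giving $\overline{F}=0$.

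For the ($\Rightarrow$) direction my proposal is to pass to the trace $\theta^{*}$ instead of wrestling with the full tensor identity. By Corollary~\ref{t-theta*} we have $\theta^{*}_{k}=g^{ij}F(e_{i},Se_{j},e_{k})$, and after the conformal change $\overline{g}=\alpha g$ the analogous trace of $\overline{F}$ with respect to $\overline{g}$ is $\overline{\theta}^{*}_{k}=\overline{g}^{ij}\overline{F}(e_{i},Se_{j},e_{k})$. Our standing hypothesis $F=0$ forces $\theta^{*}=0$, and the assumption $\overline{F}=0$ likewise forces $\overline{\theta}^{*}=0$. Feeding both equalities into the transformation rule $\overline{\theta}^{*}=\theta^{*}-\tfrac{2}{\alpha}\mathrm{d}\alpha$ from Theorem~\ref{connF-lineF} gives $\mathrm{d}\alpha=0$, so that $\alpha$ is locally constant (globally constant on each connected component of $M$), as desired.

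There is essentially no serious obstacle here: both implications reduce to one-line substitutions once Theorem~\ref{connF-lineF} is in hand. The only alternative route worth mentioning is a direct argument from \eqref{W0-glob}: one would contract the vanishing of $\overline{F}$ with $g^{ij}$, use the identities \eqref{fi-and-s}, and check from \eqref{S-Phi} that $\Phi=S-S^{3}$ is invertible; then $\mathrm{d}\alpha\circ\Phi=0$ and hence $\mathrm{d}\alpha=0$. The trace-via-$\theta^{*}$ route is cleaner because it bypasses this invertibility check altogether.
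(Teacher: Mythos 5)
Your proof is correct, and the forward direction takes a genuinely different route from the paper's. The paper works with the local form \eqref{W0} of \eqref{W0-glob}, contracts the vanishing right-hand side with $g^{kj}$ using \eqref{fi-and-s} and the tracelessness of $\Phi$ to isolate $\Phi_{i}^{s}\alpha_{s}=0$, and then concludes $\alpha_{s}=0$ from the invertibility of $\Phi$ (which follows from $\Phi^{2}=(S-S^{3})^{2}=2\,\id$, a consequence of $S^{4}=-\id$); your ``alternative route'' at the end is exactly the paper's argument. Your main route instead reads off $\mathrm{d}\alpha=0$ from the transformation law $\overline{\theta}^{*}=\theta^{*}-\frac{2}{\alpha}\mathrm{d}\alpha$ of Theorem~\ref{connF-lineF}, which is shorter and more conceptual. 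One caveat: the paper only \emph{defines} $\overline{\theta}^{*}$ by that explicit formula; it does not state that $\overline{\theta}^{*}_{k}$ equals the trace $\overline{g}^{ij}\overline{F}(e_{i},Se_{j},e_{k})$, which is the fact you need in order to deduce $\overline{\theta}^{*}=0$ from $\overline{F}=0$. That identification is true, but verifying it amounts to contracting \eqref{overlineF} exactly as one recovers $\theta^{*}$ from \eqref{c1}, using \eqref{theta-s*}, $\mathrm{tr}\,\Phi=\mathrm{tr}\,S=\mathrm{tr}\,J=0$ and again $\Phi^{2}=2\,\id$ --- so the invertibility of $\Phi$ that you claim to bypass reappears inside this step. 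With that one line supplied, your argument is complete; the converse direction is the same trivial substitution in both proofs.
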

\begin{proof} The local form of \eqref{W0-glob} is
\begin{equation}\label{W0}
  \overline{F}_{kij}=\frac{1}{2}\big(g_{kj}\Phi_{i}^{s}\alpha_{s}+g_{ki}\Phi_{j}^{s}\alpha_{s}-\tilde{g}_{kj}\alpha_{i}-\tilde{g}_{ki}\alpha_{j}\big).
\end{equation}
Let the tensor $\overline{F}$ vanish. Hence equality \eqref{W0} yields $$g_{kj}\Phi_{i}^{s}\alpha_{s}+g_{ki}\Phi_{j}^{s}\alpha_{s}-\tilde{g}_{kj}\alpha_{i}-\tilde{g}_{ki}\alpha_{j}=0.$$
 Contracting by $g^{kj}$ in the latter equality, and using \eqref{fi-and-s} and \eqref{S-Phi}, we find
    $\Phi_{i}^{s}\alpha_{s}=0$,
which implies $\alpha_{1}=\alpha_{2}=\alpha_{3}=\alpha_{4}=0$, i.e. $\alpha$ is a constant.

Vice versa. If $\alpha$ is a constant, then \eqref{W0} implies $\overline{F}=0$.
\end{proof}

\section{Some curvature properties of $(M, g, S)$}\label{5}
It is well-known, that the curvature tensor $R$ of $\nabla$ is defined by
\begin{equation*}
R(x, y)z=\nabla_{x}\nabla_{y}z-\nabla_{y}\nabla_{x}z-\nabla_{[x,y]}z.
\end{equation*}
The tensor of type $(0, 4)$ associated with $R$ is defined as follows:
\begin{equation*}
    R(x, y, z, u)=g(R(x, y)z,u).
\end{equation*}

The Ricci tensor $\rho$ and the scalar curvature $\tau$ with respect to $g$ are as usually:
\begin{equation}\label{def-rho}
    \rho(y,z)=g^{ij}R(e_{i}, y, z, e_{j}),\quad
    \tau=g^{ij}\rho(e_{i}, e_{j}).
\end{equation}

In this section we investigate some curvature properties of $(M, g, S)$, corresponding to the metric $g$ and to the associated metric $\tilde{g}$.

Let $\tilde{\Gamma}$ be the Christoffel symbols of $\tilde{g}$ and $\tilde{\nabla}$ the Levi-Civita connection of $\tilde{g}$. Let $\tilde{R}$
be the curvature tensor of $\tilde{\nabla}$. The Ricci tensor $\tilde{\rho}$ and the scalar curvature $\tilde{\tau}$ with respect to $\tilde{g}$ are given by
\begin{equation}\label{def-rho2}
    \tilde{\rho}(y,z)=\tilde{g}^{ij}\tilde{R}(e_{i}, y, z, e_{j}),\
    \tilde{\tau}=\tilde{g}^{ij}\tilde{\rho}(e_{i}, e_{j}).
\end{equation}

Let us denote
\begin{equation}\label{def-rho*}
    \tau^{*}=\tilde{g}^{ij}\rho(e_{i}, e_{j}),\quad \tilde{\tau}^{*}=g^{ij}\tilde{\rho}(e_{i}, e_{j}).
\end{equation}
Therefore we establish the following
\begin{thm}\label{connR-R}
 Let $\tilde{g}$ be the associated metric  with $g$ on $(M, g, S)$. For the Ricci tensors  $\rho$ and $\tilde{\rho}$ and for the scalar quantities $\tau$, $\tau^{*}$, $\tilde{\tau}$ and $\tilde{\tau}^{*}$ the following relation is valid:
 \begin{equation}\label{con-AE}
     \tilde{\rho}(x,y) = \rho(x,y)+\frac{1}{4}(\tilde{\tau}^{*}-\tau)g(x,y)+\frac{1}{4}(\tilde{\tau}-\tau^{*})\tilde{g}(x,y).
\end{equation}
\end{thm}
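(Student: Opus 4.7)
The plan is to compare the two Levi-Civita connections $\nabla$ and $\tilde{\nabla}$ via their difference tensor, feed the characteristic identity of Theorem~\ref{tw1} into it, and then extract the Ricci tensors by an intrinsic trace. The scalars on the right-hand side of \eqref{con-AE} will be pinned down at the end by a pair of contractions.

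First I would set $T(X,Y) = \tilde{\nabla}_X Y - \nabla_X Y$. Both connections are torsion-free, so $T$ is symmetric. Starting from $0 = (\tilde{\nabla}_X \tilde{g})(Y,Z) = (\nabla_X \tilde{g})(Y,Z) - \tilde{g}(T(X,Y),Z) - \tilde{g}(Y,T(X,Z))$ and cycling $X,Y,Z$ yields, in view of \eqref{F} and \eqref{prop-F},
$$2\tilde{g}\bigl(T(X,Y),Z\bigr) = F(X,Y,Z) + F(Y,X,Z) - F(Z,X,Y).$$
Now I would substitute \eqref{c1}. Because $g$ and $\tilde{g}$ are symmetric, the $\theta(X)$, $\theta(Y)$, $\theta^{*}(X)$, $\theta^{*}(Y)$ contributions on the right cancel pairwise, and the surviving terms are
$$\tilde{g}\bigl(T(X,Y),Z\bigr) = \tfrac{1}{4}\bigl\{g(X,Y)\,\theta(Z) + \tilde{g}(X,Y)\,\theta^{*}(Z)\bigr\}.$$
Equivalently, $T(X,Y) = \tfrac{1}{4}\bigl(g(X,Y)\,V + \tilde{g}(X,Y)\,W\bigr)$, where $V$, $W$ are the $\tilde{g}$-duals of $\theta$, $\theta^{*}$. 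Thus $T$ has a very constrained ``pure trace'' form.

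Next I would insert this $T$ into the standard curvature-change identity
$$\tilde{R}(X,Y)Z - R(X,Y)Z = (\nabla_X T)(Y,Z) - (\nabla_Y T)(X,Z) + T\bigl(X,T(Y,Z)\bigr) - T\bigl(Y,T(X,Z)\bigr)$$
and take the intrinsic trace over $X$, which gives the Ricci contraction without referring to any metric. The bilinear-in-$T$ terms collapse by the rank-one identity $\mathrm{tr}\bigl(X\mapsto \omega(X)U\bigr) = \omega(U)$, and the $\nabla T$ terms are simplified using $\nabla g = 0$ together with $\nabla \tilde{g} = F$; the residual $F$-terms so produced are re-expanded by Theorem~\ref{tw1}. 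After collecting everything I expect the Ricci difference to reduce to the form
$$\tilde{\rho}(X,Y) - \rho(X,Y) = A\, g(X,Y) + B\, \tilde{g}(X,Y)$$
for some scalar functions $A$, $B$.

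The coefficients are then determined by two contractions. Contracting with $g^{ij}$ gives $\tilde{\tau}^{*} - \tau = 4A + B\,(g^{ij}\tilde{g}_{ij})$, and with $\tilde{g}^{ij}$ gives $\tilde{\tau} - \tau^{*} = A\,(\tilde{g}^{ij}g_{ij}) + 4B$. From \eqref{fi-and-s} both cross-traces are multiples of $\Phi^{j}{}_{j}$, which vanishes because the diagonal of the matrix in \eqref{S-Phi} is zero (equivalently, $\mathrm{tr}(S - S^{3}) = 0$). Hence $A = \tfrac{1}{4}(\tilde{\tau}^{*}-\tau)$ and $B = \tfrac{1}{4}(\tilde{\tau}-\tau^{*})$, matching \eqref{con-AE}.

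The main obstacle is the collapse step above: the quadratic piece $T(X,T(Y,Z)) - T(Y,T(X,Z))$ contains a~priori cross-terms such as $g(Y,V)g(V,Z)$ and $\tilde{g}(Y,W)\tilde{g}(W,Z)$ that are not of the shape $\alpha g(Y,Z) + \beta\tilde{g}(Y,Z)$, so one must verify that they cancel against the $F$-contributions arising from $\nabla T$. This cancellation is precisely where the characteristic identity \eqref{c1} is essential — without it the Ricci difference would acquire genuinely new tensorial components. In practice, this verification can be carried out either intrinsically (using the relation $\theta^{*}_{i} = -\tfrac{1}{2}\Phi^{s}_{i}\theta_{s}$ from \eqref{theta-s*} to tie $V$ and $W$ together through $\Phi = S - S^{3}$) or componentwise via Lemma~\ref{tF} and the matrices \eqref{metricg}, \eqref{tilde-g}, \eqref{g-obr}, \eqref{g2-obr}.
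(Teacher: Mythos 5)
Your proposal is correct and follows essentially the same route as the paper: the paper likewise computes the difference tensor $\tilde{\Gamma}-\Gamma$ from the characteristic identity \eqref{usl-w1}, obtaining exactly your pure-trace form $T^{k}_{ij}=\frac{1}{4}\tilde{g}^{ks}(g_{ij}\theta_{s}+\tilde{g}_{ij}\theta^{*}_{s})$, inserts it into the curvature-change identity, contracts to get $\tilde{\rho}_{ij}=\rho_{ij}+\frac{1}{4}g_{ij}\nabla_{s}\theta^{*s}+\frac{1}{4}\tilde{g}_{ij}\nabla_{s}\theta^{s}$, and then identifies the coefficients by the same two contractions using $\mathrm{tr}\,\Phi=0$. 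The cancellation you flag as the main obstacle is indeed carried out in the paper by the componentwise route via \eqref{theta-s*} and \eqref{tild-g}, exactly as you anticipate.
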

\begin{proof}
From \eqref{defF}, applying the Christoffel formulas \eqref{2.3} to $\Gamma$ and also to $\tilde{\Gamma}$, we obtain
\begin{equation*}
 \tilde{\Gamma}^{k}_{ij}=\Gamma^{k}_{ij}+\frac{1}{2}\tilde{g}^{ks}(\nabla_{i}\tilde{g}_{js}+\nabla_{j}\tilde{g}_{is}-\nabla_{s}\tilde{g}_{ij}).
 \end{equation*}
 Substituting \eqref{usl-w1} into the above equality, we get
  \begin{equation}\label{gamma4}
 \tilde{\Gamma}^{k}_{ij}=\Gamma^{k}_{ij}+\frac{1}{4}\tilde{g}^{ks}(g_{ij}\theta_{s}+\tilde{g}_{ij}\theta^{*}_{s}).
 \end{equation}

Using \eqref{fi-and-s}, \eqref{S-Phi} and \eqref{theta-s*} we find
  \begin{equation}\label{tild-g}
    \tilde{g}^{sk}\theta_{s}=-\theta^{*k},\quad \tilde{g}^{sk}\theta^{*}_{s}=-\frac{1}{2}\theta ^{k},\quad \tilde{g}_{sk}\theta^{s}=-2\theta^{*}_{k},\quad \tilde{g}_{sk}\theta^{*s}=-\theta_{k}.
\end{equation}
Bearing in mind \eqref{gamma4}, the first and the second equality of \eqref{tild-g}, we calculate the components of the tensor $\textrm{T}=\tilde{\Gamma}-\Gamma$ of the affine deformation. They are as follows:
 \begin{equation}\label{torsion}
 T^{k}_{ij}=-\frac{1}{4}\big(g_{ij}\theta^{*k}+\frac{1}{2}\tilde{g}_{ij}\theta^{k}\big).
 \end{equation}

For the components of the curvature tensors $\tilde{R}$ and $R$, it is well-known the relation
$$ \tilde{R}^{k}_{ijs} = R^{k}_{ijs} + \nabla_{j}T^{k}_{is}-\nabla_{s}T^{k}_{ij}
+T^{a}_{is}T^{k}_{aj}-T^{a}_{ij}T^{k}_{as}.$$
Then, taking into account \eqref{usl-w1}, \eqref{theta-s*}, \eqref{tild-g} and \eqref{torsion}, we
calculate
\begin{equation*}
\begin{split}
     \tilde{R}^{k}_{ijs} =& R^{k}_{ijs} - \frac{1}{4}g_{is}(\nabla_{j}\theta^{*k}-\frac{1}{4}\theta^{*}_{j}\theta^{*k})+\frac{1}{4}g_{ij}(\nabla_{s}\theta^{*k}-\frac{1}{4}\theta^{*}_{s}\theta^{*k})\\
 &-\frac{1}{8}\tilde{g}_{is}(\nabla_{j}\theta^{k}-\frac{1}{4}\theta_{j}\theta^{*k})+\frac{1}{8}\tilde{g}_{ij}(\nabla_{s}\theta^{k}-\frac{1}{4}\theta_{s}\theta^{*k}).
\end{split}
\end{equation*}
By contracting $k=s$ in the latter equality, and having in mind  \eqref{usl-w1}, \eqref{fi-and-s}, \eqref{theta-s*}, \eqref{def-rho}, \eqref{def-rho2} and \eqref{tild-g}, we get
\begin{equation}\label{tilde-S}
\begin{split}
     \tilde{\rho}_{ij} = \rho_{ij}+\frac{1}{4}g_{ij}\nabla_{s}\theta^{*s}+\frac{1}{4}\tilde{g}_{ij}\nabla_{s}\theta^{s}.
\end{split}
\end{equation}
Due to \eqref{fi-and-s}, \eqref{def-rho}, \eqref{def-rho2}, \eqref{def-rho*} and \eqref{tilde-S} we obtain
\begin{equation*}
     \tilde{\rho}_{ij} = \rho_{ij}+\frac{1}{4}(\tilde{\tau}^{*}-\tau)g_{ij}+\frac{1}{4}(\tilde{\tau}-\tau^{*})\tilde{g}_{ij},
\end{equation*}
which is a local form of \eqref{con-AE}.
\end{proof}

Further, we use the following statements for a special basis of $T_{p}M$ on $(M, g, S)$, which are established in \cite{dok-raz}.

(i) A basis of type $\{S^{3}x, S^{2}x, Sx, x\}$ of $T_{p}M$ exists and it is called an $S$-\textit{basis}. In this case we say that \textit{the vector $x$ induces an $S$-basis of} $T_{p}M$.

(ii) If a vector $x$ induces an $S$-basis and  $\varphi$ is the angle between $x$ and $Sx$, then \begin{equation}\label{g-cos}
\begin{split}
g(x, Sx)=g(x, x)\cos\varphi,\qquad \tilde{g}(x, x)=2g(x, x)\cos\varphi,
  \end{split}
 \end{equation}
and $\frac{\pi}{4}<\varphi<\frac{3\pi}{4}$.

(iii) An orthogonal $S$-\textit{basis}  of $T_{p}M$ exists.

Now, we recall that the Ricci curvature, with respect to $g$, in the direction of a nonzero vector $x$ is the value \begin{equation}\label{Ricicurv}
    r(x)=\frac{\rho(x,x)}{g(x,x)}.
\end{equation}
Due to Theorem~\ref{connR-R} we establish the following
\begin{cor}
 Let a vector $x$ induce an $S$-basis of $T_{p}M$ and let $\varphi$ be the angle between $x$ and $Sx$. If $r$ and $\tilde{r}$ are the Ricci curvatures in the direction of $x$ with respect to the metrics $g$ and $\tilde{g}$,  then
 \begin{equation}\label{con-r}
     \tilde{r}(x) = \frac{1}{2\cos\varphi}r(x)+\frac{1}{8\cos\varphi}(\tilde{\tau}^{*}-\tau)+\frac{1}{4}(\tilde{\tau}-\tau^{*}),\quad \varphi\neq\frac{\pi}{2}.
\end{equation}
\end{cor}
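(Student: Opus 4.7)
The plan is to specialize Theorem~\ref{connR-R} to $y=x$ and then normalize by $\tilde{g}(x,x)$, converting the $\tilde{g}$-denominator into a $g$-denominator by means of the $S$-basis identity in (ii). This should immediately produce the three terms on the right-hand side of \eqref{con-r}.

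First, I would set $y=x$ in \eqref{con-AE} to obtain
\begin{equation*}
\tilde{\rho}(x,x) = \rho(x,x) + \tfrac{1}{4}(\tilde{\tau}^{*}-\tau)g(x,x) + \tfrac{1}{4}(\tilde{\tau}-\tau^{*})\tilde{g}(x,x).
\end{equation*}
Since $x$ induces an $S$-basis, the second relation in \eqref{g-cos} gives $\tilde{g}(x,x) = 2g(x,x)\cos\varphi$, and the hypothesis $\varphi\neq\pi/2$ guarantees $\cos\varphi\neq 0$, so that $\tilde{g}(x,x)\neq 0$ and division is legitimate.

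Dividing the displayed equality through by $\tilde{g}(x,x)=2g(x,x)\cos\varphi$, and using the definition $\tilde{r}(x)=\tilde{\rho}(x,x)/\tilde{g}(x,x)$ together with \eqref{Ricicurv}, the first summand on the right becomes $\rho(x,x)/(2g(x,x)\cos\varphi) = r(x)/(2\cos\varphi)$, the coefficient of $(\tilde{\tau}^{*}-\tau)$ becomes $g(x,x)/(2g(x,x)\cos\varphi\cdot 4)=1/(8\cos\varphi)$, and the coefficient of $(\tilde{\tau}-\tau^{*})$ becomes $1/4$. Collecting these three terms yields exactly \eqref{con-r}.

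There is no genuine obstacle here; the proof is a direct substitution once Theorem~\ref{connR-R} and property (ii) are in hand. The only subtle point to flag is the role of the restriction $\varphi\neq \pi/2$, which is precisely what makes $\tilde{g}(x,x)$ nonzero and so makes $\tilde{r}(x)$ well defined via the $g$-length of $x$.
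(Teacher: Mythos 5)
Your proof is correct and is exactly the argument the paper intends: the paper's own proof simply states that the corollary ``follows directly from \eqref{con-AE}, \eqref{g-cos} and \eqref{Ricicurv},'' and your computation (set $y=x$, divide by $\tilde{g}(x,x)=2g(x,x)\cos\varphi$) fills in precisely those steps. Your remark about $\varphi\neq\pi/2$ ensuring $\tilde{g}(x,x)\neq 0$ is the right observation and is left implicit in the paper.
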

\begin{proof}
The proof follows directly from \eqref{con-AE}, \eqref{g-cos} and \eqref{Ricicurv}.
\end{proof}
In \cite{dok-raz}, a Riemannian manifold $(M, g, S)$ is called
almost Einstein if the metrics $g$ and $\tilde{g}$ satisfy
\begin{equation}\label{AE}\rho(x, y) = \beta g(x, y) + \gamma \tilde{g}(x, y),\end{equation} where $\beta$ and $\gamma$ are smooth functions on $M$.

It is known that a Riemannian manifold $(M, g)$ is called Einstein if the metric $g$ satisfies
\begin{equation}\label{E}\rho(x, y) = \beta g(x, y).\end{equation}

\begin{prop}\label{th2.4}
   Let the Levi-Civita connection $\tilde{\nabla}$ of $\tilde{g}$ be a locally flat connection on the manifold $(M, g, S)$. Then the following statements are valid.

   (i) $(M, g, S)$ is an almost Einstein manifold, and the Ricci tensor $\rho$ has the form
 \begin{equation}\label{rho-AE}
\rho(x, y) = \frac{\tau}{4}g(x, y)+\frac{\tau^{*}}{4}\tilde{g}(x,y).
\end{equation}

(ii) If a vector $x$ induces an $S$-basis, then the Ricci curvatures in the direction of the basis vectors are
\begin{equation}\label{ricc}
     r(x)=r(Sx)=r(S^{2}x)=r(S^{3}x)=\frac{\tau}{4}+\frac{\tau^{*}}{2}\cos\varphi ,
\end{equation}
where $\varphi=\angle(x, Sx)$.
\end{prop}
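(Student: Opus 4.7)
The plan is to leverage Theorem~\ref{connR-R}, whose formula \eqref{con-AE} already encodes the relation between $\rho$ and $\tilde{\rho}$. The hypothesis that $\tilde{\nabla}$ is locally flat means that the curvature tensor $\tilde{R}$ vanishes identically, which in turn forces $\tilde{\rho}\equiv 0$ on $M$. Contracting $\tilde{\rho}=0$ with $\tilde{g}^{ij}$ gives $\tilde{\tau}=0$, and contracting with $g^{ij}$ gives $\tilde{\tau}^{*}=0$. Substituting these three vanishings into \eqref{con-AE} collapses it to
\begin{equation*}
  0 = \rho(x,y) - \tfrac{\tau}{4}\,g(x,y) - \tfrac{\tau^{*}}{4}\,\tilde{g}(x,y),
\end{equation*}
which is exactly \eqref{rho-AE}. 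Comparison with \eqref{AE} with $\beta=\tau/4$ and $\gamma=\tau^{*}/4$ identifies $(M,g,S)$ as an almost Einstein manifold, completing part (i).

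For part (ii), I would just feed \eqref{rho-AE} into the definition \eqref{Ricicurv}. Evaluating at $x$ gives
\begin{equation*}
  r(x) = \frac{\tau}{4} + \frac{\tau^{*}}{4}\cdot\frac{\tilde{g}(x,x)}{g(x,x)},
\end{equation*}
and the second formula in \eqref{g-cos} converts the ratio $\tilde{g}(x,x)/g(x,x)$ into $2\cos\varphi$, producing the stated value $\tfrac{\tau}{4}+\tfrac{\tau^{*}}{2}\cos\varphi$.

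To extend the equality to $Sx$, $S^{2}x$ and $S^{3}x$, I would invoke the isometry property \eqref{izometry}: this immediately yields $g(S^{k}x,S^{k}x)=g(x,x)$ for $k=1,2,3$, and also $g(S^{k}x,S^{k+1}x)=g(x,Sx)$, so by \eqref{metricf} one obtains $\tilde{g}(S^{k}x,S^{k}x)=2g(x,Sx)=\tilde{g}(x,x)$. Plugging these equalities into \eqref{rho-AE} shows that $\rho(S^{k}x,S^{k}x)=\rho(x,x)$, and thus $r(S^{k}x)=r(x)$ for every $k$, giving \eqref{ricc}.

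The argument is essentially a direct substitution, so no genuine obstacle is expected; the only point requiring a moment of care is the verification that $\tilde{g}(S^{k}x,S^{k}x)$ is independent of $k$, which is where the compatibility \eqref{izometry} between $S$ and $g$ enters nontrivially. Once this invariance is recorded, both (i) and (ii) follow at once from the earlier machinery.
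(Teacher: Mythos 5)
Your argument coincides with the paper's own proof: part (i) is obtained by setting $\tilde{R}=0$, hence $\tilde{\rho}=0$ and $\tilde{\tau}=\tilde{\tau}^{*}=0$, in \eqref{con-AE}, and part (ii) follows by combining \eqref{rho-AE} with \eqref{izometry}, \eqref{metricf}, \eqref{g-cos} and \eqref{Ricicurv}. Your explicit verification that $\tilde{g}(S^{k}x,S^{k}x)=\tilde{g}(x,x)$ is exactly the step the paper compresses into ``using \eqref{izometry} and \eqref{metricf}'', so the proposal is correct and essentially identical in route.
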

\begin{proof}
If $\tilde{\nabla}$ is a locally flat connection, then $\tilde{R}=0$. From \eqref{def-rho2} and \eqref{def-rho*} it follows $\tilde{\rho}=0$ and $\tilde{\tau}=\tilde{\tau}^{*}=0$. Hence \eqref{con-AE} implies \eqref{rho-AE}. Therefore, according to \eqref{AE}, we have that $(M, g, S)$ is an almost Einstein manifold.

Since $\rho$ is given by \eqref{rho-AE}, using \eqref{izometry} and \eqref{metricf}, we obtain
\begin{equation}\label{rhoL2}
\begin{split}
    \rho(x, x)=&\rho(Sx, Sx)=\rho(S^{2}x, S^{2}x)=\rho(S^{3}x, S^{3}x)\\&=\frac{\tau}{4}g(x,x)+\frac{\tau^{*}}{4}\tilde{g}(x,x).
    \end{split}
\end{equation}
Let a vector $x$ induce an $S$-basis. Hence equalities \eqref{g-cos}, \eqref{Ricicurv} and \eqref{rhoL2} imply \eqref{ricc}.
\end{proof}

\begin{cor}
Let $(M, g, S)$ be an Einstein manifold. If a vector $x$ induces an $S$-basis, then the Ricci curvatures in the direction of the basis vectors are
\begin{equation*}
     r(x)=r(Sx)=r(S^{2}x)=r(S^{3}x)=\frac{\tau}{4}.
\end{equation*}
\end{cor}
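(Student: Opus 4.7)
The plan is to derive the conclusion directly from the Einstein condition \eqref{E}, without routing through Proposition~\ref{th2.4} (whose hypothesis that $\tilde{\nabla}$ be locally flat is not assumed here).

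First I would determine the Einstein constant $\beta$ in terms of $\tau$. Contracting the identity $\rho(x,y)=\beta g(x,y)$ with $g^{ij}$ and using the definition of $\tau$ in \eqref{def-rho} together with $g^{ij}g_{ij}=4$ (since $\dim M=4$), I get $\tau=4\beta$, hence $\beta=\tau/4$. Substituting back into \eqref{E} yields $\rho(x,x)=\tfrac{\tau}{4}g(x,x)$ for every vector $x$, so by the definition \eqref{Ricicurv} of the Ricci curvature,
\begin{equation*}
 r(x)=\frac{\rho(x,x)}{g(x,x)}=\frac{\tau}{4}
\end{equation*}
for every nonzero $x\in T_{p}M$.

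Next I would observe that this value is automatically $S$-invariant. Because $S$ is an isometry by \eqref{izometry}, we have $g(S^{k}x,S^{k}x)=g(x,x)$ for $k=1,2,3$, and applying the Einstein relation to the vectors $Sx$, $S^{2}x$, $S^{3}x$ gives $\rho(S^{k}x,S^{k}x)=\tfrac{\tau}{4}g(S^{k}x,S^{k}x)$. Dividing, one concludes $r(Sx)=r(S^{2}x)=r(S^{3}x)=\tau/4$, which matches $r(x)$ and yields the asserted common value.

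Since the argument is essentially a trace computation followed by an appeal to the isometry property of $S$, I do not expect a genuine obstacle; the only thing to be careful about is not to invoke Proposition~\ref{th2.4}, which requires $\tilde{R}=0$. In the Einstein setting we instead obtain the result as a specialisation of the almost Einstein formula \eqref{AE} with $\gamma=0$ and $\beta=\tau/4$, so that the $\tilde{g}$-term, which would otherwise introduce the factor $\cos\varphi$ via \eqref{g-cos}, simply drops out.
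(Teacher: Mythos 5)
Your proof is correct, but it takes a genuinely different route from the paper's. The paper treats this as a corollary of Proposition~\ref{th2.4}: it compares the formula $\rho=\frac{\tau}{4}g+\frac{\tau^{*}}{4}\tilde{g}$ from \eqref{rho-AE} with the Einstein condition \eqref{E}, concludes $\tau^{*}=0$, and substitutes into \eqref{ricc} --- so its argument tacitly keeps the hypothesis $\tilde{R}=0$ of that proposition in force. You instead work directly from \eqref{E}: tracing $\rho=\beta g$ with $g^{ij}$ gives $\beta=\tau/4$, hence $r(x)=\tau/4$ for every nonzero $x$ by \eqref{Ricicurv}, and the isometry property \eqref{izometry} (or simply the fact that the identity holds in every direction) gives the equalities for $Sx$, $S^{2}x$, $S^{3}x$. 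Your version is more elementary and strictly more general: it proves the corollary for any Einstein $(M,g,S)$ as literally stated, without the local flatness of $\tilde{\nabla}$, and in fact shows $r$ equals $\tau/4$ in \emph{every} direction on a $4$-dimensional Einstein manifold, so the $S$-basis hypothesis is not even needed. What the paper's route buys is only economy in context --- a one-line deduction from results already on the page --- at the cost of an extra hypothesis that your argument shows to be superfluous.
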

\begin{proof}
By comparing \eqref{rho-AE} and \eqref{E} we have that $\tau^{*}$ vanishes. Thus the above equalities follow directly by substituting $\tau^{*}=0$ into \eqref{ricc}.
\end{proof}

In a similar way to Proposition~\ref{th2.4} we prove the next
\begin{prop}
 Let the Levi-Civita connection $\nabla$ of $g$ be a locally flat connection on the manifold $(M, \tilde{g}, S)$. Then the following statements are valid.

   (i) $(M, \tilde{g}, S)$ is an almost Einstein manifold and the Ricci tensor $\tilde{\rho}$ has the form
$$\tilde{\rho}(x, y) = \frac{\tilde{\tau}}{4}\tilde{g}(x,y)+\frac{\tilde{\tau}^{*}}{4}g(x, y).$$

(ii) If a vector $x$ induces an $S$-basis, then the Ricci curvatures in the direction of the basis vectors are
$$\tilde{r}(x)=\tilde{r}(Sx)=\tilde{r}(S^{2}x)=\tilde{r}(S^{3}x)=\frac{\tilde{\tau}}{4}+\frac{\tilde{\tau}^{*}}{8\cos\varphi},\quad \varphi\neq\frac{\pi}{2}.$$
\end{prop}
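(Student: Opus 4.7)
The plan is to mirror the proof of Proposition~\ref{th2.4} with the roles of $g$ and $\tilde g$ interchanged. The key input is that the identity \eqref{con-AE} of Theorem~\ref{connR-R} is a linear relation linking $\rho$ and $\tilde\rho$, so imposing flatness for one of the two Levi-Civita connections immediately yields a closed-form expression for the other Ricci tensor.

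For part (i), I would start from the hypothesis $\nabla$ locally flat, i.e.\ $R=0$. Then $\rho=0$ by \eqref{def-rho}, whence $\tau=0$ from the second equality of \eqref{def-rho} and $\tau^{*}=0$ from the first equality of \eqref{def-rho*}. Substituting these three zeros into \eqref{con-AE} leaves precisely
\[
\tilde\rho(x,y)=\frac{\tilde\tau^{*}}{4}g(x,y)+\frac{\tilde\tau}{4}\tilde g(x,y),
\]
which is the stated formula and, by comparison with \eqref{AE}, exhibits $(M,\tilde g,S)$ as an almost Einstein manifold with respect to its own metric $\tilde g$.

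For part (ii), the first observation to record is that $S$ is also an isometry of $\tilde g$: combining \eqref{izometry} and \eqref{metricf} one has $\tilde g(Sx,Sy)=g(Sx,S^{2}y)+g(S^{2}x,Sy)=g(x,Sy)+g(Sx,y)=\tilde g(x,y)$. Feeding $S^{k}x$ into the formula from (i) and using this $S$-invariance of $\tilde g$ together with \eqref{izometry} shows that $\tilde\rho(S^{k}x,S^{k}x)=\tilde\rho(x,x)$ and $\tilde g(S^{k}x,S^{k}x)=\tilde g(x,x)$ for $k=0,1,2,3$, so the four Ricci curvatures with respect to $\tilde g$ in the basis directions all coincide with $\tilde\rho(x,x)/\tilde g(x,x)$.

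The closing step is numerical. From the formula of (i) with $y=x$,
\[
\tilde\rho(x,x)=\frac{\tilde\tau}{4}\tilde g(x,x)+\frac{\tilde\tau^{*}}{4}g(x,x),
\]
and when $x$ induces an $S$-basis with angle $\varphi\neq\pi/2$, the second equality of \eqref{g-cos} lets us rewrite $g(x,x)=\tilde g(x,x)/(2\cos\varphi)$. Dividing through by $\tilde g(x,x)$ gives the claimed common value $\tilde\tau/4+\tilde\tau^{*}/(8\cos\varphi)$. There is no real obstacle here — the whole argument is bookkeeping on top of Theorem~\ref{connR-R} — and the only item one must not forget to state is the $S$-invariance of $\tilde g$, which is what lets us propagate the identity for $\tilde\rho(x,x)$ to the other three basis directions.
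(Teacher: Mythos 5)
Your proposal is correct and follows exactly the route the paper intends: the paper gives no separate proof, stating only that the result is proved ``in a similar way to Proposition~\ref{th2.4}'', and your argument is precisely that mirror image --- set $R=0$, hence $\rho=\tau=\tau^{*}=0$, substitute into \eqref{con-AE}, and then use the $S$-invariance of $g$ and $\tilde{g}$ together with \eqref{g-cos} for part (ii). Your explicit verification that $\tilde{g}(Sx,Sy)=\tilde{g}(x,y)$ is a detail the paper leaves implicit, but it is the right thing to record.
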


\section{A locally conformal K\"{a}hler manifold $(M, g, J)$}\label{3}
The fundamental K\"{a}hler form of the structure $(g, J)$ on an almost complex manifold $(M, g, J)$ is determined by
\begin{equation}\label{tensorj}
J(x,y)=g(x,Jy)
\end{equation}
and it is skew-symmetric, i.e. $J(x,y)=-J(y,x)$ (\cite{GrHer}).

In this section we consider a Hermitian manifold $(M, g, J)$ with a complex structure $J=S^{2}$.
\begin{lemma}\label{tj}
 The nonzero components $\nabla_{i}J_{jk}=g((\nabla_{e_{i}}J)e_{j}, e_{k})$ of the fundamental tensor $\nabla J$ on the manifold $(M, g, J)$ are given by
\begin{equation}\label{nablaF}
\begin{array}{ll}
 \nabla_{3}J_{12}=-\nabla_{3}J_{34}=\nabla_{1}J_{23}=-\nabla_{1}J_{14}=\frac{1}{2}(B_{1}+B_{3}-A_{2}),\\
 \nabla_{1}J_{34}=-\nabla_{1}J_{12}=\nabla_{3}J_{23}=-\nabla_{3}J_{14}=\frac{1}{2}(A_{4}+B_{1}-B_{3}),\\
 \nabla_{2}J_{34}=-\nabla_{2}J_{12}=\nabla_{4}J_{23}=-\nabla_{4}J_{14}=\frac{1}{2}(B_{2}+B_{4}-A_{3}),\\
 \nabla_{4}J_{12}=-\nabla_{4}J_{34}=\nabla_{2}J_{23}=-\nabla_{2}J_{14}=\frac{1}{2}(A_{1}+B_{4}-B_{2}).\\
\end{array}
\end{equation}
\end{lemma}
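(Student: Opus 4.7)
The plan is to mirror the proof of Lemma~\ref{tF}, replacing the symmetric tensor $\tilde g_{jk}$ by the K\"ahler two-form $J_{jk}=g(e_j,Je_k)$.

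First, I would compute $J=S^2$ in the basis $\{e_i\}$ from \eqref{Q}: direct matrix squaring gives $Je_1=-e_3$, $Je_2=-e_4$, $Je_3=e_1$, $Je_4=e_2$. Combining this with \eqref{metricg} yields a skew-symmetric matrix $(J_{jk})$ whose entries lie in $\{0,\pm A,\pm B\}$, playing here the role that \eqref{tilde-g} played in the proof of Lemma~\ref{tF}.

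Next, I would apply the standard covariant-derivative formula
\begin{equation*}
\nabla_i J_{jk}=\partial_i J_{jk}-\Gamma^a_{ij}J_{ak}-\Gamma^a_{ik}J_{aj},
\end{equation*}
exactly analogous to \eqref{defF}. The Christoffel symbols $\Gamma^k_{ij}$ come from \eqref{2.3} together with \eqref{metricg} and \eqref{g-obr}, and are precisely the ones implicitly tabulated in the proof of Lemma~\ref{tF}; so no new Christoffel computation is required. Since $\partial_i J_{jk}$ is a single partial derivative of $\pm A$, $\pm B$, or $0$, the resulting expressions are linear combinations of $A_i$ and $B_i$ with coefficients built from $A$, $B$ and $1/D$, which after clearing denominators reduce to linear combinations of $A_i$ and $B_i$ with integer coefficients.

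Finally, the skew-symmetry $J_{jk}=-J_{kj}$ collapses $\nabla J$ into at most $24$ essentially different components. Term-by-term simplification using the explicit $(J_{jk})$ and the Christoffel data should produce exactly the four families of four equal entries each listed in \eqref{nablaF}, with all remaining components vanishing. The main obstacle is purely bookkeeping: tracking the Christoffel cancellations that force coincidences such as $\nabla_3 J_{12}=-\nabla_3 J_{34}=\nabla_1 J_{23}=-\nabla_1 J_{14}$, and verifying that the components not listed are indeed identically zero. No conceptual step is required beyond the direct substitution, exactly as in Lemma~\ref{tF}.
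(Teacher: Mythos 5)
Your proposal matches the paper's proof essentially verbatim: the paper likewise forms the matrix $(J_{ik})$ of the K\"ahler form from \eqref{Q}, \eqref{metricg} and \eqref{tensorj} (its equation \eqref{j}, with entries in $\{0,\pm A,\pm B\}$ exactly as you predict), and then applies $\nabla_{i}J_{jk}=\partial_{i}J_{jk}-\Gamma_{ij}^{a}J_{ak}-\Gamma_{ik}^{a}J_{aj}$ with the Christoffel symbols from \eqref{2.3}, \eqref{metricg} and \eqref{g-obr}. The remaining work is the same direct computation you describe, so your outline is correct and takes the same route.
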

\begin{proof}
Because of \eqref{Q}, \eqref{metricg} and \eqref{tensorj}, we get that the matrix of the fundamental K\"{a}hler form is
of the type:
\begin{equation}\label{j}
  (J_{ik})=\begin{pmatrix}
            0& B & A & B\\
            -B& 0& B & A\\
            -A &-B & 0 & B\\
            -B & -A & -B & 0\\
            \end{pmatrix}.
\end{equation}
Applying the Christoffel symbols $\Gamma$, obtained by \eqref{metricg}, \eqref{g-obr} and  \eqref{2.3}, and the components of the matrix \eqref{j} to equality
\begin{equation*}
\nabla_{i}J_{jk}=\partial_{i}J_{jk}-\Gamma_{ij}^{a}J_{ak}-\Gamma_{ik}^{a}J_{aj},
\end{equation*}
we calculate the nonzero components of $\nabla J$, given in \eqref{nablaF}.
\end{proof}

Immediately, we have the following
\begin{cor}\label{lema-omega}
 The components $\omega_{k}=g^{ij}F(e_{i}, e_{j}, e_{k})$ of the 1-form $\omega$ on the manifold $(M, g, J)$ are expressed by the equalities
 \begin{equation}\label{omega}
\begin{split}
\omega_{1}&=\frac{1}{D}\big(A(B_{4}+B_{2}-A_{3})+B(2B_{3}-A_{2}-A_{4})\big),\\
\omega_{2}&=\frac{1}{D}\big(A(B_{3}-B_{1}-A_{4})+B(2B_{4}+A_{1}-A_{3})\big),\\
\omega_{3}&=\frac{1}{D}\big(A(A_{1}+B_{4}-B_{2})+B(-2B_{1}+A_{2}-A_{4})\big),\\
\omega_{4}&=\frac{1}{D}\big(A(A_{2}-B_{1}-B_{3})+B(-2B_{2}+A_{1}+A_{3})\big),\\
\end{split}
\end{equation}
\end{cor}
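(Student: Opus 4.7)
The plan is to mimic the derivation used for the earlier 1-forms $\theta$ and $\theta^{*}$ in Corollary~\ref{t-theta} and Corollary~\ref{t-theta*}, since the statement is of the same flavor: express a contraction of a covariant-derivative tensor by $g^{ij}$ in closed form. The definition recorded in \eqref{J} gives $\omega_{k} = g^{ij}\,\nabla_{i}J_{jk}$ (the $F$ in the statement is a minor notational slip; the summand is $g\big((\nabla_{e_{i}}J)e_{j},e_{k}\big)=\nabla_{i}J_{jk}$), so the whole task is purely computational: plug in \eqref{g-obr} for $g^{ij}$ and \eqref{nablaF} for $\nabla_{i}J_{jk}$, and simplify four sums.

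First I would fix $k\in\{1,2,3,4\}$ and list the nonzero terms that can contribute to $\omega_{k}=g^{ij}\nabla_{i}J_{jk}$. Because $J$ is skew-symmetric, $\nabla_{i}J_{jk}$ is skew in $j,k$, which immediately eliminates $j=k$ and pairs the surviving entries. Using \eqref{nablaF}, for each fixed $k$ there are at most eight nonzero $\nabla_{i}J_{jk}$ with $(i,j)$ ranging over the positions that \eqref{nablaF} records; reading off $g^{ij}$ from \eqref{g-obr} (whose nonzero entries for each row $i$ are the three positions with $\pm A/D$, $\pm B/D$) limits the contributing $(i,j)$ pairs further.

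Next I would carry out the sum for $k=1$ explicitly: collect all nonzero $\nabla_{i}J_{j1}$ from \eqref{nablaF} together with the correct entry of $g^{ij}$, group the terms with coefficient $A/D$ and those with coefficient $B/D$ separately, and check that the algebraic sums reproduce the right-hand side $\frac{1}{D}\big(A(B_{4}+B_{2}-A_{3})+B(2B_{3}-A_{2}-A_{4})\big)$ of \eqref{omega}. The computation for $k=2,3,4$ is formally identical after relabelling indices, so I would either repeat the bookkeeping or invoke the cyclic/shift symmetry of the skew-circulant pattern of $g^{ij}$ and of the listing in \eqref{nablaF} to reduce the work.

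The only real difficulty is bookkeeping rather than conceptual: keeping track of signs from (a) the skew-symmetry $J_{jk}=-J_{kj}$, (b) the signs of the $\pm B$ entries in \eqref{g-obr}, and (c) the identifications of repeated components in \eqref{nablaF} (e.g. $\nabla_{1}J_{34}=-\nabla_{1}J_{12}$). Once these are handled consistently, the four lines of \eqref{omega} fall out directly, and the proof concludes with a short statement that the computation follows from \eqref{g-obr} and \eqref{nablaF} by contraction, exactly in the style of the proofs of Corollary~\ref{t-theta} and Corollary~\ref{t-theta*}.
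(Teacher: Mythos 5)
Your proposal is correct and follows exactly the paper's own (one-line) proof: a direct computation contracting $g^{ij}$ from \eqref{g-obr} with the components $\nabla_{i}J_{jk}$ from \eqref{nablaF}, and you rightly note that the $F$ in the corollary's statement is a notational slip for $g\big((\nabla_{e_{i}}J)e_{j},e_{k}\big)$ as defined in \eqref{J}. Nothing further is needed.
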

\begin{proof}
  The equalities \eqref{omega} follow from \eqref{g-obr} and \eqref{nablaF} by direct computations.
 \end{proof}
Due to Lemma~\ref{tj} and Corollary~\ref{lema-omega} we establish the following
\begin{thm}
The manifold $(M, g, J)$ is a locally conformal K\"{a}hler manifold.
\end{thm}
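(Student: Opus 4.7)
Since the excerpt already records that $(M,g,J)$ is Hermitian (with $J=S^{2}$), it remains only to check the defining identity \eqref{J} of the locally conformal K\"ahler class. My plan is a direct component-by-component verification in the frame $\{e_i\}$, using everything already assembled in Lemma~\ref{tj} and Corollary~\ref{lema-omega}.

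First I would rewrite \eqref{J}, evaluated on $x=e_i$, $y=e_j$, $z=e_k$, as the coordinate identity
$$(\nabla_i J)_{jk}\;=\;\tfrac{1}{2}\bigl(g_{ij}\omega_k-g_{ik}\omega_j+J_{ij}\,J_k^{\,s}\omega_s-J_{ik}\,J_j^{\,s}\omega_s\bigr).$$
The right-hand side calls for $g_{ij}$ from \eqref{metricg}, the skew matrix $J_{ij}$ from \eqref{j}, the 1-form $\omega_k$ from \eqref{omega}, and the mixed tensor $J_k^{\,s}=(S^{2})_k^{\,s}$, which is read off from \eqref{Q} by squaring the skew-circulant directly. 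In particular the contraction $J_k^{\,s}\omega_s$ is simply a signed permutation of the components $\omega_k$, so no derivative work is involved beyond what is already in Corollary~\ref{lema-omega}.

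Next I would exploit the fact that both sides are skew-symmetric in $(j,k)$ and that, according to \eqref{nablaF}, only the four pairs $(j,k)\in\{(1,2),(3,4),(2,3),(1,4)\}$ carry nontrivial components of $\nabla J$. Thus the verification splits into sixteen scalar identities of ``nontrivial'' type (four such pairs times four values of $i$), together with the additional check that the right-hand side vanishes identically on the remaining pairs $(1,3)$ and $(2,4)$. Each such identity reduces, after clearing the common factor $1/D$ in $\omega_k$ and substituting the off-diagonal entries of $g_{ij}$ and $J_{ij}$, to a linear relation in the partials $A_i,B_i$; the four primitive combinations
$$A_{2}-B_{1}-B_{3},\quad A_{3}-B_{2}-B_{4},\quad A_{1}+B_{4}-B_{2},\quad A_{4}+B_{1}-B_{3}$$
that structure \eqref{nablaF} should reassemble on the right-hand side, matching term by term.

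The main obstacle is not geometric but purely combinatorial: carrying out roughly twenty-four scalar checks, tracking the denominator $D=A^{2}-2B^{2}$ and the off-diagonal couplings through each of them without sign or index error, and confirming that the two twisted contractions by $J$ recombine into the four differential combinations above. Once this bookkeeping is completed, \eqref{J} holds identically on $(M,g,J)$, which therefore lies in the locally conformal K\"ahler class by definition.
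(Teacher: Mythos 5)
Your proposal is correct and follows essentially the same route as the paper: the paper's proof likewise verifies, componentwise in the frame $\{e_i\}$, that the data of Lemma~\ref{tj} and Corollary~\ref{lema-omega} satisfy the local form $\nabla_{k}J_{ij}=\frac{1}{2}\big(g_{ki}\omega_{j}-g_{kj}\omega_{i}+J_{ki}\tilde{\omega}_{j}-J_{kj}\tilde{\omega}_{i}\big)$ with $\tilde{\omega}_{i}=J_{i}^{a}\omega_{a}$, which is exactly the identity you write down. The only difference is that you spell out the bookkeeping (which pairs $(j,k)$ are nontrivial and how the checks split up) that the paper compresses into a single ``we obtain''.
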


\begin{proof}
Using  \eqref{Q}, \eqref{metricg},  \eqref{j}, \eqref{omega} and Lemma~\ref{tj} we obtain
 \begin{equation}\label{c2}
 \begin{split}
  \nabla_{k}J_{ij}=&\frac{1}{2}\big(g_{ki}\omega_{j}-g_{kj}\omega_{i}+J_{ki}\tilde{\omega}_{j}-J_{kj}\tilde{\omega}_{i}\big),\quad \tilde{\omega}_{i}=J_{i}^{a}\omega_{a}.
  \end{split}
\end{equation}
 The latter identity is the local form of \eqref{J}, which is
 a defining condition of a locally conformal K\"{a}hler manifold.
\end{proof}

\section{A Lie group with a structure $(g, S)$}\label{6}

Let $G$ be a $4$-dimensional real connected Lie group. Let $\mathfrak{g}$ be the corresponding Lie algebra with a basis $\{e_{1}, e_{2},e_{3},e_{4}\}$ of left invariant vector fields. We introduce a skew-circulant structure $S$ and a metric $g$ as follows:
\begin{eqnarray}
&Se_{1}=-e_{4},\quad Se_{2}=e_{1},\quad Se_{3}=e_{2},\quad Se_{4}=e_{3};\label{lie} \\
&g(e_{i}, e_{j})= \delta_{ij},\label{g}
\end{eqnarray}
where $\delta_{ij}$ is the Kronecker delta.

Consequently the used basis $\{e_i\}$ is an orthonormal $S$-basis. Obviously, \eqref{Q} and \eqref{izometry} are valid and $(g, S)$ is a structure of the considered type. We denote the corresponding manifold by $(G, g, S)$. Then the associated manifold is $(G, g, J)$, where $J$ satisfies
\begin{equation}\label{lie-j}
Je_{1}=-e_{3},\quad Je_{2}=-e_{4},\quad Je_{3}=e_{1},\quad Je_{4}=e_{2}.
\end{equation}

The real four-dimensional indecomposable Lie algebras are classified by Mubarak\-zyanov (\cite{mub}). This scheme seems to be the most popular (see \cite{Biggs} and the references therein).
We pay attention to the class $\{\mathfrak{g}_{4,5}\}$, which represents an indecomposable Lie algebra,
depending on two real parameters $a$ and $b$. Actually, it induces
a family of manifolds whose properties are functions of $a$ and $b$.

According to the definition of the class $\{\mathfrak{g}_{4,5}\}$, we have that the nonzero brackets are as follows (\cite{Biggs}):
\begin{equation}\label{skobki1}
  [e_{1}, e_{4}]=e_{1},\; [e_{2}, e_{4}]=ae_{2},\; [e_{3}, e_{4}]=be_{3},\quad
  -1\leq b \leq a \leq 1,\; ab\neq 0.
\end{equation}
The well-known Koszul formula implies
\begin{equation*}
    2g(\nabla_{e_{i}}e_{j}, e_{k})=g([e_{i}, e_{j}],e_{k})+g([e_{k}, e_{i}],e_{j})+g([e_{k}, e_{j}],e_{i})
\end{equation*}
 and, using \eqref{g} and \eqref{skobki1}, we obtain
\begin{equation}\label{nabla-e}
\begin{array}{lll}
    \nabla_{e_{1}}e_{1}=-e_{4},\quad
&    \nabla_{e_{1}}e_{4}=e_{1},\quad
&    \nabla_{e_{2}}e_{2}=-ae_{4},\\
   \nabla_{e_{2}}e_{4}=ae_{2},\quad
&    \nabla_{e_{3}}e_{4}=be_{3},\quad
&    \nabla_{e_{3}}e_{3}=-be_{4}.
\end{array}
\end{equation}
Furthermore, with the help of the above equalities we compute the components of the tensor $F$ on $(G, g, S)$ and the components of the tensor $\nabla J$ on $(G, g, J)$. We find conditions under which $F$ satisfies \eqref{c1} and $\nabla J$ satisfies \eqref{J}.
\begin{prop}\label{kt2}
If $\mathfrak{g}$ belongs to $\{\mathfrak{g}_{4, 5}\}$, then the fundamental tensor $F$ on $(G, g, S)$ satisfies the property \eqref{c1} if and only if the condition
$a=b=1$ holds.
\end{prop}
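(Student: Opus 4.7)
The plan is to verify the identity \eqref{c1} componentwise in the orthonormal left-invariant frame $\{e_i\}$. The earlier closed formulas \eqref{nablaf}--\eqref{theta*} were derived in a coordinate basis and do not transfer directly to this setting, but the local form \eqref{usl-w1} of \eqref{c1} is tensorial; therefore it suffices to compute $F_{ijk}$, $\theta_k$ and $\theta^{*}_k$ from \eqref{lie}, \eqref{g} and \eqref{nabla-e}, and compare.

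First I would record the constant matrices of $g$ and $\tilde g$ in this frame. By \eqref{g}, $(g_{ij})$ is the identity, and \eqref{metricf} together with \eqref{lie} yields $\tilde g_{12} = \tilde g_{23} = \tilde g_{34} = 1$, $\tilde g_{14} = -1$, and zero otherwise (plus symmetric partners), which corresponds to $A=1$, $B=0$ in the notation of Section~\ref{2}. Next I would complete the Christoffel data: \eqref{nabla-e} gives $\nabla_{e_i}$ for $i=1,2,3$, and a one-line Koszul computation shows $\nabla_{e_4}e_k = 0$ for every $k$ (each nonzero bracket in \eqref{skobki1} has $e_4$ in its second slot, so the three Koszul terms collapse in pairs). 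Because the components $\tilde g_{jk}$ are constants on $G$, the definition \eqref{F} reduces to
\begin{equation*}
F_{ijk} = -\tilde g(\nabla_{e_i}e_j, e_k) - \tilde g(e_j, \nabla_{e_i}e_k),
\end{equation*}
producing a short table of $F$-components linear in $a$ and $b$. Summing $F_{iik}$ and $F(e_i, Se_i, e_k)$ then gives $\theta_k$ and $\theta^{*}_k$ explicitly.

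For necessity, I would look at the single entry $F_{111}$. Since $\tilde g_{11} = 0$, the right-hand side of \eqref{usl-w1} at $(k,i,j) = (1,1,1)$ reduces to $\tfrac{1}{2}\theta_1$. Direct computation gives $F_{111} = -2$ and $\theta_1 = -2 - a - b$, so \eqref{usl-w1} forces $a + b = 2$; combined with $-1 \le b \le a \le 1$ and $ab \ne 0$ from \eqref{skobki1} this leaves only $a = b = 1$.

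For sufficiency, after substituting $a = b = 1$ the only nonzero components of $\theta, \theta^{*}$ are $\theta_1 = -4$, $\theta_3 = 4$, $\theta^{*}_4 = -4$, and I would then verify \eqref{usl-w1} for every index triple. The triples with $i = 4$ vanish identically on both sides (the left since $\nabla_{e_4} = 0$, the right since the combination of $g, \tilde g, \theta, \theta^{*}$ entries cancels), and the remaining cases split into a handful of symmetry classes (diagonal, two equal indices, three distinct) that can be checked uniformly. The main obstacle is simply the bookkeeping: a single tensorial identity unfolds into dozens of scalar equations, and a stray sign in $\tilde g_{14} = -1$ or in $Se_1 = -e_4$ can easily derail the verification; using the cyclic action of $S$ on the frame cuts the number of independent checks noticeably.
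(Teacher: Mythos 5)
Your proposal is correct and follows essentially the same route as the paper: compute $F_{ijk}$, $\theta_k$ and $\theta^{*}_k$ directly in the left-invariant orthonormal frame from \eqref{lie}, \eqref{g} and \eqref{nabla-e} (the paper's \eqref{F45}, \eqref{theta-lie} and \eqref{tildeg}) and check the local identity \eqref{usl-w1} componentwise; your numbers ($F_{111}=-2$, $\theta_1=-2-a-b$, and $\theta_1=-4$, $\theta_3=4$, $\theta^{*}_4=-4$ at $a=b=1$) agree with the paper's. Your extraction of necessity from the single component $F_{111}$, giving $a+b=2$ and hence $a=b=1$ via the constraint $-1\le b\le a\le 1$, is a tidier presentation of a step the paper leaves implicit, but it is not a different method.
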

\begin{proof}
Bearing in mind \eqref{F}, \eqref{lie}, \eqref{g} and \eqref{nabla-e}
we get the
components $F_{ijk}$ of $F$, $\theta_i$ of $\theta$ and  $\theta^{*}_i$ of $\theta^{*}$ with respect to the basis $\{e_i\}$.
The nonzero of them are the following:
\begin{equation}\label{F45}
\begin{array}{ll}
  F_{124}=-F_{113}=-F_{134}= -1,& F_{111}=-F_{144}=-2,\\  F_{313}=F_{332}=F_{324}=-b,&
  F_{333} =-F_{344} =2b,\\ F_{212}=F_{214}=-F_{223}=F_{234}=-a.
\end{array}
\end{equation}
\begin{equation}\label{theta-lie}
\begin{array}{ll}
 \theta_{1}=-2-a-b,& \theta_{3}=2a+b+1,\\
 \theta^{*}_{2}=\frac{1}{2}(1-b), &\theta^{*}_{4}=-\frac{1}{2}(2a+3b+3).
 \end{array}
\end{equation}

By equalities \eqref{metricf}, \eqref{lie} and \eqref{g},
we find
\begin{equation}\label{tildeg}
\begin{array}{l}
\tilde{g}(e_{1}, e_{1})= \tilde{g}(e_{2}, e_{2})=\tilde{g}(e_{3}, e_{3})=\tilde{g}(e_{4}, e_{4})=0,\\ \tilde{g}(e_{1}, e_{3})=\tilde{g}(e_{2}, e_{4})= 0,\\
\tilde{g}(e_{1}, e_{2})= \tilde{g}(e_{2}, e_{3})=\tilde{g}(e_{3}, e_{4})=-\tilde{g}(e_{1}, e_{4})=1.
\end{array}
\end{equation}
Hence \eqref{lie}, \eqref{g}, \eqref{F45}, \eqref{theta-lie} and \eqref{tildeg} imply that the condition
\eqref{usl-w1} holds if and only if $a=b=1$.
\end{proof}

\begin{prop}\label{W1}
  If $\mathfrak{g}$ belongs to $\{\mathfrak{g}_{4,5}\}$, then
$(G, g, J)$ belongs to the class of locally conformal K\"{a}hler manifolds if and only if the condition $a=b=1$ holds.
\end{prop}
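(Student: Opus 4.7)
The plan is to mirror the method of Proposition~\ref{kt2}, but using $\nabla J$, $\omega$ and the identity \eqref{c2} in place of $F$, $\theta$, $\theta^{*}$ and \eqref{usl-w1}. Since the theorem in Section~\ref{3} already shows that a Hermitian $(M,g,J)$ of our type is locally conformal K\"ahler exactly when the local identity \eqref{c2} holds, it suffices to test \eqref{c2} on the orthonormal $S$-basis $\{e_{i}\}$ of $\mathfrak{g}$ and determine the values of $a,b$ for which it is satisfied.

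First I would use \eqref{nabla-e} together with the $J$-action from \eqref{lie-j} to compute
\begin{equation*}
(\nabla_{e_{i}}J)e_{j}=\nabla_{e_{i}}(Je_{j})-J(\nabla_{e_{i}}e_{j}),
\end{equation*}
and then read off the components $\nabla_{i}J_{jk}=g((\nabla_{e_{i}}J)e_{j},e_{k})$ by \eqref{g}. Since the metric is the Kronecker delta in this basis, this gives a short explicit table of components, each of them a linear expression in $1$, $a$ and $b$, analogous to \eqref{F45}. Next I would contract with $g^{ij}=\delta^{ij}$ to obtain the Lee form
\begin{equation*}
\omega_{k}=\sum_{i}\nabla_{i}J_{ik},
\end{equation*}
producing four components $\omega_{k}$ that are again simple linear functions of $a$ and $b$ (in the spirit of \eqref{theta-lie}).

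With these data in hand, I would substitute into \eqref{c2}, computing $\tilde{\omega}_{i}=J_{i}^{a}\omega_{a}$ from the matrix of $J$ determined by \eqref{lie-j}. Testing \eqref{c2} for each triple $(k,i,j)$ reduces the problem to a finite linear system in $a,b$. For the sufficiency direction one simply plugs $a=b=1$ into the formulas for $\nabla_{i}J_{jk}$ and $\omega_{k}$ and verifies that each instance of \eqref{c2} becomes an identity. For the necessity direction one isolates those specific triples whose equations involve only $a-1$ or only $b-1$ (for example, indices coming from the $e_{2}$-block give a multiple of $a-1$, and those from the $e_{3}$-block a multiple of $b-1$), and concludes $a=1$ and $b=1$ separately.

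The main obstacle is purely bookkeeping: one must be systematic in enumerating the nontrivial components of $\nabla J$ and in reducing the resulting conditions to independent equations in $a$ and $b$, keeping the admissible range $-1\le b\le a\le 1$, $ab\ne 0$ from \eqref{skobki1} in mind. There is no conceptual difficulty beyond the fact that the structure constants of $\mathfrak{g}_{4,5}$ are not symmetric in the indices $1,2,3$, so the components of $\nabla J$ and $\omega$ do not have an obvious cyclic symmetry and must be examined one by one.
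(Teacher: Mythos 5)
Your proposal is correct and follows essentially the same route as the paper: compute the components $\nabla_{i}J_{jk}$ from \eqref{nabla-e} and \eqref{lie-j}, contract with $\delta^{ij}$ to get $\omega_{k}$, and check that the defining identity \eqref{c2} holds precisely when $a=b=1$. The paper's proof is exactly this computation (yielding the nonzero components $\nabla_{1}J_{12}=-\nabla_{1}J_{34}=1$, $\nabla_{3}J_{32}=\nabla_{3}J_{14}=b$, $\omega_{2}=b+1$), so no further comment is needed.
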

\begin{proof}
Bearing in mind \eqref{tensorj}, \eqref{g}, \eqref{lie-j} and \eqref{nabla-e}
we obtain the
components $\nabla_{i}J_{jk}$ of $\nabla J$
and $\omega_i$ of $\omega$ with respect to the basis $\{e_i\}$.
The nonzero of them are the following:
\begin{equation}\label{J45}
\begin{split}
  \nabla_{1}J_{12}=-\nabla_{1}J_{34}=1,\quad
  \nabla_{3}J_{32}=\nabla_{3}J_{14}=b, \quad  \omega_{2}=b+1.
\end{split}
\end{equation}
By equalities \eqref{tensorj}, \eqref{lie}, \eqref{g} and \eqref{J45}, we get that the condition
\eqref{c2} holds if and only if $a=b=1$.
\end{proof}
\begin{rem}
Obviously, if $\mathfrak{g}$ is in $\{\mathfrak{g}_{4,5}\}$, then $(G, g, J)$ is not a K\"{a}hler manifold.
\end{rem}

By virtue of Proposition~\ref{kt2} and Proposition~\ref{W1}, we immediately have the following
\begin{cor}
If $\mathfrak{g}$ belongs to $\{\mathfrak{g}_{4, 5}\}$, then the fundamental tensor $F$ on $(G, g, S)$  satisfies \eqref{c1} if and only if $(G, g, J)$ belongs to the class of locally conformal K\"{a}hler manifolds.
\end{cor}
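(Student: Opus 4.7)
The plan is to observe that the corollary is a purely formal consequence of the two preceding propositions, obtained by chaining the two biconditionals through a common intermediate condition on the Lie algebra parameters.

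First, I would invoke Proposition~\ref{kt2}, which asserts that, for $\mathfrak{g}\in\{\mathfrak{g}_{4,5}\}$, the fundamental tensor $F$ on $(G,g,S)$ satisfies the characteristic identity \eqref{c1} if and only if $a=b=1$. Next, I would invoke Proposition~\ref{W1}, which asserts that, under the same hypothesis on $\mathfrak{g}$, the associated Hermitian manifold $(G,g,J)$ is locally conformal K\"{a}hler if and only if $a=b=1$. Both propositions isolate the same necessary and sufficient algebraic condition on the structure constants.

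Combining these two equivalences, the condition that $F$ satisfies \eqref{c1} and the condition that $(G,g,J)$ is locally conformal K\"{a}hler are each logically equivalent to $a=b=1$, and hence equivalent to one another. This completes the argument.

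There is no genuine obstacle: the entire content lies in Propositions~\ref{kt2} and~\ref{W1}, whose proofs have already carried out the computations of the components of $F$, $\theta$, $\theta^{*}$, $\nabla J$ and $\omega$ on the Lie group with bracket structure \eqref{skobki1}. The present corollary is merely the transitivity of the two biconditionals through the shared parameter condition, so I would write no more than a single sentence citing Proposition~\ref{kt2} and Proposition~\ref{W1}.
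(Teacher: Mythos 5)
Your proposal is correct and matches the paper exactly: the corollary is stated immediately after Proposition~\ref{kt2} and Proposition~\ref{W1} with the phrase ``by virtue of'' those two results, i.e.\ the paper also obtains it by chaining the two biconditionals through the common condition $a=b=1$. Nothing further is needed.
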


Next we get
\begin{prop}\label{kt3}
 Let $(G, g, S)$ be a manifold with a Lie algebra $\mathfrak{g}$ from the class $\{\mathfrak{g}_{4, 5}\}$. If $a=b=1$ are valid, then $(G, g, S)$ is a non-flat Einstein manifold with a negative scalar curvature $\tau=-12$.
\end{prop}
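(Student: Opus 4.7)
Substituting $a=b=1$ in \eqref{skobki1} collapses the brackets to $[e_i,e_4]=e_i$ for $i=1,2,3$ with all others zero, and \eqref{nabla-e} becomes
\[
\nabla_{e_i}e_i=-e_4,\quad \nabla_{e_i}e_4=e_i\qquad(i=1,2,3),
\]
with every other $\nabla_{e_i}e_j$ vanishing (as a quick Koszul check confirms). Since $\{e_i\}$ is orthonormal, the Einstein condition and the value of $\tau$ reduce to a finite combinatorial computation; no hidden structural obstacle is expected.

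I would next compute the curvature components from $R(e_i,e_j)e_k=\nabla_{e_i}\nabla_{e_j}e_k-\nabla_{e_j}\nabla_{e_i}e_k-\nabla_{[e_i,e_j]}e_k$. Owing to the sparsity of $\nabla$ and of the brackets, most summands vanish, and one finds $R(e_i,e_j)e_j=-e_i$ for every pair $i\neq j$ (for instance, $R(e_1,e_2)e_2=\nabla_{e_1}(-e_4)=-e_1$ since $[e_1,e_2]=0$, and $R(e_1,e_4)e_4=-\nabla_{e_1}e_4=-e_1$ since $\nabla_{e_4}e_4=0$). This immediately exhibits non-vanishing components of $R$, so $(G,g,S)$ is non-flat. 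Contracting with $g^{ij}=\delta^{ij}$, the diagonal Ricci entries each collect three contributions of $-1$ via the symmetry $R(e_i,e_j,e_j,e_i)=-1$, yielding $\rho(e_i,e_i)=-3$ for $i=1,2,3,4$, while a short case-by-case check shows that all off-diagonal Ricci components vanish. Therefore $\rho=-3\,g$, so by \eqref{E} the manifold is Einstein with $\beta=-3$, and $\tau=\sum_i\rho(e_i,e_i)=-12$.

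The argument is essentially mechanical, and the main effort is careful bookkeeping of signs in the antisymmetry of $R$ and in the covariant derivatives involving $e_4$. Conceptually, the numbers are forced by the well-known fact that for $a=b=1$ the pair $(G,g)$ is the standard metric on real hyperbolic $4$-space in its Iwasawa realization; its constant sectional curvature $-1$ produces the six elementary sectional curvatures, Ricci eigenvalue $-3$, and scalar curvature $-12$ that appear above.
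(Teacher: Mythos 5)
Your proposal is correct and follows essentially the same route as the paper: compute the Levi-Civita connection for $a=b=1$, evaluate the curvature tensor directly on the orthonormal basis, and contract to get $\rho=-3g$ and $\tau=-12$; your identification of the resulting space as real hyperbolic $4$-space with sectional curvature $-1$ is a nice conceptual confirmation of the same numbers. One small caution: in $R(e_1,e_4)e_4$ the term that actually produces $-e_1$ is $-\nabla_{[e_1,e_4]}e_4=-\nabla_{e_1}e_4$, and one must also check $\nabla_{e_4}\nabla_{e_1}e_4=\nabla_{e_4}e_1=0$, which holds but is not quite what your parenthetical justification says.
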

\begin{proof}
We calculate the components $R_{ijks}$ of the curvature tensor $R$ with respect to $\{e_i\}$, having in mind  the symmetries of $R$ and the condition $a=b=1$. The nonzero of them are
\begin{align}\label{rlamda}
  R_{1212}=R_{1414}=R_{2323}=R_{3434}=R_{1313}=R_{2424}=1.
\end{align}
Using \eqref{def-rho} and \eqref{rlamda}, we compute the components of $\rho$ and the value of $\tau$. The nonzero of them are as follows:
\begin{equation*}
    \rho_{11}=\rho_{22}=\rho_{33}=\rho_{44}=-3,\qquad
    \tau=-12.
\end{equation*}
Then, from \eqref{g}, we get $\rho=\frac{\tau}{4}g$. Consequently, due to \eqref{E}, the manifold $(G, g, S)$ is Einstein.

\end{proof}
\begin{rem} Lie groups with a Lie algebra in $\{\mathfrak{g}_{4,5}\}$ are studied in \cite{dok-raz-man} as an example of 4-dimensional Riemannian manifolds with circulant structures.

 An example of a locally conformal K\"{a}hler manifold constructed on a Lie group with a Lie algebra in $\{\mathfrak{g}_{4, 5}\}$ is considered in \cite{HM}. The metric of the manifold is indefinite but the condition $a=b=1$ also exists.

The example of a 4-dimensional Riemannian manifold with skew-circulant structures, constructed in this section, has similar properties to the above examples.
\end{rem}



\begin{thebibliography}{99}
\bibitem{angella} D. Angella, M. Origlia, \emph{Locally conformally Kähler structures on
four-dimensional solvable Lie algebras}, Complex Manifolds {\bf 7} (2020),  1--35.

\bibitem{Biggs} R.~Biggs, C.~C.~Remsing, \emph{On the classification of real four-dimensional Lie groups}, J. Lie Theory {\bf 26} (2016), no. 4, 1001--1035.

\bibitem{cherevko} Y.~Cherevko, V.~Berezovski, I.~Hinterleitner, D.~Smetanova, \emph{Infinitesimal transformations of locally conformal K\"{a}hler manifolds}, Mathematics (2019) {\bf 7}(8), 658.  doi.org/10.3390/math7080658.

\bibitem{dok-raz}
I.~Dokuzova, D.~Razpopov, \emph{Four-dimensional almost Einstein manifolds with skew-circulant structures}, J. Geom. {\bf 111} (2020), no. 9, 18 pp.

\bibitem{dok-raz-man}
I.~Dokuzova, D.~Razpopov, M.~Manev, \emph{Two types of Lie groups as 4-dimensional Riemannian manifolds with circulant structure}, Math. Educ. Math. {\bf 47}, in: Proc. 47th Spring Conf. UBM, Borovets, (2018), 115--120.

\bibitem{GrHer}
A.~Gray, L.M.~Hervella, \emph{The sixteen classes of almost Hermitian manifolds and their linear invariants}, Ann. Mat. Pura Appl. (4), {\bf 123} (1980),  35--58.

\bibitem{huang}
T.~Huang,
\emph{A note on Euler number of locally conformally K\"{a}hler manifolds}, Math. Z. (2020), doi.org/10.1007/s00209-020-02491-y.

\bibitem{HM} H.~Manev, \emph{Almost hypercomplex manifolds with Hermitian-Norden metrics and 4-dimensional indecomposable real Lie algebras depending on two parameters}, C. R. Acad. Bulgare Sci. {\bf 73} (2020),  no. 5,  589-599.

\bibitem{moroianu-ornea} A.~Moroianu, S.~Moroianu, L.~Ornea, \emph{Locally conformally Kähler manifolds with holomorphic Lee field}, Diff. Geom. Appl., {\bf 60} (2018), 33-38.

\bibitem{mub}
G.~M.~Mubarakzyanov, \emph{On solvable Lie algebras}, Izv. Vys\v{s}. U\v{c}ebn. Zaved. Matematika (1963), no. 1 (32), 114--123. (in Russian)

\bibitem{prvanovic}
M.~Prvanovi\'{c}, \emph{Some properties of the locally conformal K\"{a}hler manifold},  Bull. Cl. Sci. Math. Nat. Sci. Math. {\bf 35} (2010), 9--23.

\bibitem{vilcu} G.~Vilcu, \emph{Ruled CR-submanifolds locally conformal K\"{a}hler manifolds}, J. Geom. Phys. {\bf 62} (2012), no. 6,  1366--1372.




\end{thebibliography}

\end{document}